\newtheorem{theorem}{Theorem}[section]
\newtheorem{lemma}[theorem]{Lemma}
\newtheorem{corollary}[theorem]{Corollary}
\newtheorem{proposition}[theorem]{Proposition}
\theoremstyle{definition}
\newtheorem{definition}[theorem]{Definition}
\newtheorem{example}[theorem]{Example}
\newtheorem{remark}[theorem]{\bf Remark}
\newcommand{\oomit}[1]{{}}
\newcommand{\bN}{\ensuremath{\mathbb{N}}}
\newcommand{\bC}{\ensuremath{\mathbb{C}}}
\newcommand{\cA}{\ensuremath{\mathcal{A}}}
\newcommand{\lbox}{\lower 1pt \hbox{$\,\ssq\,$} }
\renewcommand{\phi}{\ensuremath{\varphi}}
\renewcommand{\epsilon}{\ensuremath{\varepsilon}}
\renewcommand{\leq}{\ensuremath{\leqslant}}
\renewcommand{\geq}{\ensuremath{\geqslant}}
\newcommand{\ssq}{\raise 1pt\hbox{$\scriptscriptstyle \square$}}
\newcommand \dstar{\mbox{\rm (\kern-0pt $\genfrac{}{}{0pt}{}{\raisebox{0pt}{$\star$}}{\raisebox{1pt}{$\star$}}$\rm)}}
\newcounter{smallromans}
\newenvironment{romanenumerate}
{\begin{list}{{\normalfont\textrm{(\roman{smallromans})}}}
  {\usecounter{smallromans}\setlength{\itemindent}{0cm}
   \setlength{\leftmargin}{5ex}\setlength{\labelwidth}{4ex}
   \setlength{\topsep}{0.75\parsep}\setlength{\partopsep}{0ex}
   \setlength{\itemsep}{0ex}}}
{\end{list}}
\newcommand{\tensor}{\widehat{\otimes}}
\newcommand{\Acal}{\mathcal{A}}
\newcommand{\BAI}{bounded approx\-imate identity}
\newcommand{\AM}{amen\-able}
\newcommand{\AMy}{amen\-ability}
 \newcommand{\BAA}{boundedly approx\-imately amen\-able}
 \newcommand{\BAAy}{boundedly approx\-imate amen\-ability}
\newcommand{\ApA}{approx\-imately amen\-able}
\newcommand{\ApAy}{approx\-imate amen\-ability}
 \newcommand{\BApA}{boundedly approx\-imately amen\-able}
 \newcommand{\BAC}{boundedly approx\-imately contract\-ible}
 \newcommand{\AC}{approx\-imately contract\-ible}
\begin{document}



\author{F. Ghahramani\fnref{labelG}}
\address{Department of Mathematics 
University of Manitoba, Winnipeg, R3T 2N2, Canada}
\ead{Fereidoun.Ghahramani@umanitoba.ca}

\author{R. J. Loy\corref{cor2}}
\address{Mathematical Sciences Institute, 
Building 27, Union Lane, Australian National University, 
Canberra, ACT 2601, Australia}
\ead{Rick.Loy@anu.edu.au}

\title
{Approximate amenability of tensor products of Banach  algebras} 

\fntext[labelG]{Supported by NSERC grant 36640-2012, and MSRVP at ANU.}



\newcommand\timestring{\begingroup
\count0=\time \divide\count0 by 60
\count2 = \count0
\count4 = \time \multiply\count0 by 60
\advance\count4 by -\count0
\ifnum\count4 <10 \toks1={0}%
\else \toks1 = {}%
\fi
\ifnum\count2<12 \toks0 = {a.m.}%
\else \toks0 = {p.m.}%
\advance\count2 by -12
\fi
\ifnum\count2=0 \count2 = 12
\fi
\number\count2:\the\toks1 \number\count4
\thinspace \the\toks0
\endgroup}

\newcommand\timenow{ \timestring \enskip
{\ifcase\month\or January\or February\or March\or 
  April\or May\or June\or July\or August\or Sepember\or
  October\or November\or December\fi}
\number\day}

\begin{abstract} 
Examples constructed by the first author and Charles Read make it  clear that many of the hereditary properties of amenability no longer hold  for approximate amenability.  These and earlier results of the authors also show that the presence of a \BAI\ often facilitates positive results.  
Here we show that the tensor product of \ApA\ algebras need not be \ApA, and investigate conditions under which
   $A$ and $B$ being \ApA\ implies, or is implied by, $A\tensor B$ or $A^{\#}\tensor B^{\#}$ being \ApA. Once again, the role of having a \BAI\ comes to the fore.
   Our methods also enable us to prove that if $A\tensor B$ is amenable, then so are $A$ and $B$, a result proved by Barry Johnson in 1996 under an additional assumption. 
\end{abstract}

\begin{keyword}  
Approximately amenable Banach algebra \sep amenable Banach algebra \sep  tensor product \sep \ approximate identity

 \MSC[2010]  46H25
 \end{keyword}

\maketitle 

\vskip2mm 
\emph{In memoriam Charles John Read -- mathematician, gentleman and friend}

\section{Introduction}  \label{Introduction}
The concept of amenability for a Banach algebra, introduced by Johnson in 1972
\cite{John}, has proved to be of enormous importance in Banach algebra theory. 
In \cite{GhaLoy}, and subsequently in \cite{GhaLoyZh},  several modifications of this notion were introduced, in particular that of approximate amenability;  and  much work has been done in the last 10 years or so, \cite{DLZ, CGZ, CG, GhaSt, DL, GhaR1, GhaR2} for example. See also \cite{Zhang} for a recent survey.  In this paper the focus is on these newer notions for tensor products.  In particular, we investigate relations between  the approximate amenability of $A$ and $B$  and that of $A\tensor B$ or $A^{\#}\tensor B^{\#}$.

Let $A$ be an algebra, and let $X$ be an $A$-bimodule.  A \emph{derivation} is a linear map $D: A \to X$ such that
\[
D(ab)  = a\,\cdot \, D(b) + D(a)\,\cdot \,b \quad (a,b \in A)\,.
\]
For $x \in X$, set $ad_x: a \mapsto  a\,\cdot \, x - x\,\cdot \,a,\,\; A \to X$. Then $ad_x$ is a derivation; these are  the \emph{inner} derivations.

Let $A$ be a Banach algebra, and let $X$ be a
Banach $A$-bimodule.  A continuous derivation $D:A\to X$ is \emph{approximately inner} if there is
a net $(x_\alpha)$ in $X$ such that
\[
D(a) =
\lim_\alpha (a\,\cdot\, x_\alpha  - x_\alpha\,\cdot\, a)\quad (a \in A)\,,
\]
so that $D =\lim_\alpha ad_{x_\alpha}$ in the strong-operator topology  of ${\mathcal B}(A)$.

\begin{definition}\cite{GhaLoy}\label{app} Let $A$ be a Banach algebra. Then
$A$ is \emph{approximately amenable} (respectively \emph{approximately contractible})  if, for
each Banach $A$-bimodule $X$, every continuous derivation $D:A\to X^{*}$ (respectively $D:A\to X$), is approximately inner.  If it is always possible to choose the approximating net $(ad_{x_\alpha})$ to be bounded (with the bound dependent only on $D$) then $A$ is \emph{boundedly} \ApA\ (respectively, \emph{boundedly} \AC).
\end{definition}

Of course $A$ is \emph{amenable} (respectively, \emph{contractible}) if every continuous derivation $D:A\to X^{*}$ (respectively $D:A\to X$), is inner.

\vskip1mm
Of these various notions, amenability, contractibility, \ApA,  \BAA\ and \BAC\ are all distinct,  \AC\ and \ApA\ coincide, \cite{GhaLoyZh,  GhaR1, GhaR2}.
Also, requiring the approximating net of derivations to converge weak$^{*}$ is the same as \ApA, \cite{GhaLoyZh}. This latter notion will arise naturally below. 

\section{Some observations}

Recall the result of Barry Johnson \cite[Proposition 5.4]{John}:

\begin{proposition}\label{Barryprod}
Let $A$ and $B$ be amenable Banach algebras.  Then $A\tensor B$ is amenable.\hfill\qed
\end{proposition}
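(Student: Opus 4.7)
My plan is to use Johnson's characterisation of amenability in terms of bounded approximate diagonals: a Banach algebra $C$ is amenable precisely when it admits a bounded net $(d_\lambda) \subset C \tensor C$ with $c \cdot d_\lambda - d_\lambda \cdot c \to 0$ and $\pi_C(d_\lambda)\,c \to c$ for every $c \in C$, where $\pi_C$ denotes the product map. Choose \bad s $(m_\alpha)$ and $(n_\beta)$ for $A$ and $B$ respectively; the strategy is to reshuffle the tensor factors of $m_\alpha \otimes n_\beta$ to manufacture a \bad\ for $A \tensor B$.

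Concretely, the universal property of the projective tensor product (together with associativity and the flip isomorphism) furnishes a contractive natural map
\[
\Phi: (A \tensor A) \tensor (B \tensor B) \to (A \tensor B) \tensor (A \tensor B),
\]
characterised on elementary tensors by $\Phi(a_1 \otimes a_2 \otimes b_1 \otimes b_2) = (a_1 \otimes b_1) \otimes (a_2 \otimes b_2)$. I would set $p_{\alpha,\beta} = \Phi(m_\alpha \otimes n_\beta)$, indexed by the product directed set, and verify the two \bad\ conditions on elementary tensors $a \otimes b \in A \tensor B$. A direct computation gives $(a \otimes b) \cdot p_{\alpha,\beta} = \Phi((a\cdot m_\alpha) \otimes (b \cdot n_\beta))$ and $p_{\alpha,\beta} \cdot (a \otimes b) = \Phi((m_\alpha \cdot a) \otimes (n_\beta \cdot b))$, so the difference equals $\Phi$ applied to
\[
(a \cdot m_\alpha - m_\alpha \cdot a) \otimes (b \cdot n_\beta) + (m_\alpha \cdot a) \otimes (b \cdot n_\beta - n_\beta \cdot b),
\]
each summand of which tends to $0$ because one factor vanishes while the other stays bounded. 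Similarly, $\pi_{A\tensor B}(p_{\alpha,\beta}) = \pi_A(m_\alpha) \otimes \pi_B(n_\beta)$, and multiplication by $a \otimes b$ yields $(\pi_A(m_\alpha) a) \otimes (\pi_B(n_\beta) b) \to a \otimes b$.

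A standard density-and-uniform-boundedness $3\varepsilon$ argument then extends both conditions from elementary tensors to all of $A \tensor B$, producing the desired \bad\ and hence the amenability of $A \tensor B$. I do not anticipate a serious obstacle: the construction is entirely formal once one trusts the universal property of $\tensor$, and no unital hypothesis is required, since \bad s encode approximate identities automatically.
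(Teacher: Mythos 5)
Your argument is correct, and it is worth noting at the outset that the paper itself offers no proof of this proposition: it is quoted verbatim from Johnson's memoir (\cite[Proposition 5.4]{John}) with a \qed\ attached. So there is no internal proof to compare against; the relevant comparison is with Johnson's original argument, which is cohomological --- one reduces to a neo-unital dual module, splits off the part of the derivation coming from each factor using multiplier extensions and bounded approximate identities, and corrects by inner derivations. Your route via \bad s is the now-standard alternative, resting on Johnson's companion characterisation of amenability by approximate diagonals, and it is both shorter and entirely formal once that characterisation is granted. The details check out: the shuffle map $\Phi$ is an isometry (associativity of $\tensor$ plus the flip of the middle factors), so $(p_{\alpha,\beta})$ is bounded by the product of the diagonal bounds; the telescoping identity you write for $(a\otimes b)\cdot p_{\alpha,\beta}-p_{\alpha,\beta}\cdot(a\otimes b)$ is valid, and convergence over the product directed set works because in each summand the vanishing factor is paired with a uniformly bounded one; and the passage from elementary tensors to all of $A\tensor B$ is legitimate because the operators $t\mapsto t\cdot p_{\alpha,\beta}-p_{\alpha,\beta}\cdot t$ and $t\mapsto \pi(p_{\alpha,\beta})t-t$ are uniformly bounded and converge pointwise on a dense subspace. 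What the diagonal approach buys is exactly what you say --- no unitisation or neo-unitality reduction is needed --- at the cost of invoking the diagonal characterisation as a black box; Johnson's derivation-theoretic proof is self-contained within the cohomological framework but considerably more delicate.
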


A version of this for the \ApA\ case was stated in \cite[Proposition 2.3]{GhaLoy}, but the argument there is incomplete. 
The matter is clarified in \cite[Proposition 4.1]{CG}, from which we state:
\begin{theorem}\label{GLcorr}
 Suppose that $A$ is a \BAA\ Banach algebra with a bounded
approximate identity, and that $B$ is an amenable Banach algebra. Then $A\tensor B$ is \BAA.\qed
\end{theorem}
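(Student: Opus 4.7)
The plan is to reduce matters to the description of \BAAy\ in terms of bounded approximate virtual diagonals and to combine these with the genuine virtual diagonal produced by amenability of $B$.

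First I would invoke the hypothesis that $A$ has a \BAI, together with the \BAA\ hypothesis, to obtain a bounded net $(M_\alpha) \subset (A^{\#} \tensor A^{\#})^{**}$ such that, for each $a \in A$,
\[
a\cdot M_\alpha - M_\alpha\cdot a \to 0 \quad\text{and}\quad \pi_A^{**}(M_\alpha)\cdot a - a \to 0
\]
in the weak$^{*}$ topology. Such a bounded approximate virtual diagonal exists by the \BAI-enhanced characterization of \BAAy, and the weak$^{*}$ formulation is justified by the weak$^{*}$/norm equivalence for \ApAy\ noted in the introduction. Simultaneously, Johnson's theorem applied to the amenable algebra $B$ yields an exact virtual diagonal $N \in (B \tensor B)^{**}$ satisfying $b\cdot N = N\cdot b$ and $\pi_B^{**}(N)\cdot b = b$ for every $b \in B$.

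Next I would combine $M_\alpha$ with $N$. Using the canonical flip
\[
(A^{\#} \tensor A^{\#}) \tensor (B \tensor B) \longrightarrow (A^{\#} \tensor B) \tensor (A^{\#} \tensor B)
\]
that swaps the middle factors, $M_\alpha$ and $N$ together produce an element $P_\alpha$ in the bidual of $(A \tensor B)^{\#} \tensor (A \tensor B)^{\#}$ via the Arens product. A direct calculation, exploiting the identities satisfied by $M_\alpha$ and by $N$ and the density of elementary tensors in $A \tensor B$, shows that $(P_\alpha)$ is a bounded approximate virtual diagonal for $A \tensor B$; by the corresponding characterization of \BAAy, it follows that $A \tensor B$ is \BAA.

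The main obstacle is the bookkeeping inside the various biduals: identifying $(A \tensor B)^{\#}$ suitably inside $A^{\#} \tensor B^{\#}$, handling the Arens multiplication when forming $P_\alpha$, and verifying that both the commutation defect $(a \otimes b)\cdot P_\alpha - P_\alpha\cdot (a \otimes b)$ and the $\pi$-defect $\pi^{**}(P_\alpha)\cdot(a \otimes b) - a \otimes b$ go to zero uniformly in $\alpha$. The role of the \BAI\ on $A$ is to make the $A^{\#}$-side identifications compatible (so that an approximate diagonal in the unitized tensor is serviceable), while the fact that $N$ is an \emph{exact} rather than merely approximate diagonal is essential: it prevents the cross-terms from degrading the uniform bound, which is precisely why the parallel statement with $B$ only \BAA\ is considerably more delicate and requires the separate treatment developed later in the paper.
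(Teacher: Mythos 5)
First, note that the paper does not prove Theorem \ref{GLcorr} at all: it is quoted from \cite[Proposition 4.1]{CG}, so there is no in-paper argument to compare yours against. Your overall strategy --- pairing a diagonal-type witness for the \BAAy\ of $A$ with Johnson's virtual diagonal for $B$ --- is the natural one and is in the spirit of the cited source, but your opening step contains a genuine error. Bounded approximate amenability of $A$, even with a \BAI, does \emph{not} supply a \emph{bounded} net $(M_\alpha)\subset(A^{\#}\tensor A^{\#})^{**}$ with $a\cdot M_\alpha-M_\alpha\cdot a\to 0$ and $\pi^{**}(M_\alpha)\cdot a\to a$. If such a bounded net existed, a weak$^{*}$-cluster point $M$ (which exists by Banach--Alaoglu, and the module actions of $a\in A$ and the map $\pi^{**}$ are weak$^{*}$-continuous) would satisfy $a\cdot M=M\cdot a$ and $\pi^{**}(M)\cdot a=a$ for all $a\in A$, i.e.\ it would be a genuine virtual diagonal, and $A$ would be amenable; the theorem would then reduce to Proposition \ref{Barryprod}. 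What \BAAy\ actually provides (see \cite[Theorem 2.4]{GhaLoyZh} and its refinement in \cite{CG}) is a net whose \emph{defects} are uniformly controlled, $\|a\cdot M_\alpha-M_\alpha\cdot a\|\leq K\|a\|$, while $\|M_\alpha\|$ itself may be unbounded; moreover the honest statement involves auxiliary nets in $A^{**}$ rather than a clean unitized diagonal. Your construction of $P_\alpha$ can likely be salvaged under the correct hypothesis, since the commutation defect of $P_\alpha$ factors through the defect of $M_\alpha$ times $\|N\|$; but as written the premise is either false or proves far more than the theorem.

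Two further points need attention even after that repair. The element $P_\alpha$ you build lives in $\bigl((A^{\#}\tensor B)\tensor(A^{\#}\tensor B)\bigr)^{**}$, not in $\bigl((A\tensor B)^{\#}\tensor(A\tensor B)^{\#}\bigr)^{**}$, so it is not literally a diagonal-type witness for $A\tensor B$; you must cut it down using the \BAI\ of $A\tensor B$. More seriously, your defects converge only weak$^{*}$, and the passage from weak$^{*}$ approximate innerness to norm approximate innerness via \cite[Proposition 2.1]{GhaLoyZh} destroys the bounds --- the paper makes exactly this point in the remark following Lemma \ref{esstrick3}. Since the conclusion you want is \emph{bounded} approximate amenability of $A\tensor B$, this loss of control is precisely the delicate issue, and your appeal to ``the weak$^{*}$/norm equivalence for approximate amenability noted in the introduction'' does not address it.
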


In \cite{CG}  the question is raised whether the tensor product of two (boundedly) \ApA\ Banach algebras is itself (boundedly) \ApA.  We begin by answering this question in the negative.

\begin{theorem}\label{firstresult}
The tensor product of two \BAA\ Banach algebras need not be \ApA.
\end{theorem}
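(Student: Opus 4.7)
The strategy is to deduce the theorem from an example constructed by Ghahramani and Read in \cite{GhaR1, GhaR2}: a commutative Banach algebra $A$ (possessing a \BAI) that is \BAA\ yet whose matrix algebra $M_{2}(A)$ fails to be approximately amenable. Accepting this construction as input, I would take $B = M_{2}(\mathbb{C})$, which is finite-dimensional and semisimple, hence amenable and in particular \BAA. The standard Banach algebra identification
\[
A \tensor M_{2}(\mathbb{C}) \;\cong\; M_{2}(A),
\]
which follows from the universal property of the projective tensor product combined with the matrix representation of $M_{2}(\mathbb{C})$, then transports the failure of approximate amenability from $M_{2}(A)$ to $A \tensor B$.

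The plan therefore reduces to three short tasks. First, select the appropriate Ghahramani--Read algebra $A$ and record from \cite{GhaR1, GhaR2} that it is a commutative algebra with a \BAI, that it is \BAA, and that $M_{2}(A)$ is not \ApA. Second, verify that the projective tensor norm on $A \tensor M_{2}(\mathbb{C})$ is equivalent to a standard matrix norm on $M_{2}(A)$, so that the two are isomorphic as Banach algebras; this is a routine computation using the fact that $M_{2}(\mathbb{C})$ is finite-dimensional with basis $\{E_{ij}\}$. Third, observe that $M_{2}(\mathbb{C})$ is amenable, hence \BAA, and assemble the counterexample.

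The only point carrying real mathematical weight is the Ghahramani--Read input, namely the existence of a commutative \BAA\ algebra $A$ for which $M_{2}(A)$ is not approximately amenable; I would import this rather than reprove it, as the underlying construction is a delicate sequence algebra whose treatment belongs to the cited papers. Once that input is in hand, the proof of Theorem \ref{firstresult} is an immediate identification, and moreover the example anticipates the theme of the paper: that the presence of a \BAI\ alone does not suffice to make \ApAy\ well-behaved under tensor products, and stronger hypotheses (as in Theorem \ref{GLcorr}) are genuinely necessary.
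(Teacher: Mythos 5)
Your argument has a fatal gap at the single point you yourself identify as ``carrying real mathematical weight'': the input you propose to import from \cite{GhaR1, GhaR2} is not what those papers prove, and in the form you state it, it cannot exist at all. What Ghahramani and Read actually construct is a \BAA\ algebra $A$ --- explicitly \emph{without} a \BAI; the absence of a bounded approximate identity is the whole point of \cite{GhaR1} --- such that $A\oplus A^{\textrm{op}}$ is not \ApA. There is no result there asserting that $M_{2}(A)$ fails to be \ApA, and whether \ApAy\ passes to $2\times 2$ matrix algebras is a separate question you cannot cite your way past. Worse, your hypotheses are self-defeating: if $A$ were \BAA\ \emph{with} a \BAI, then since $M_{2}(\bC)$ is amenable, Theorem \ref{GLcorr} would force $A\tensor M_{2}(\bC)\cong M_{2}(A)$ to be \BAA, hence \ApA. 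So no algebra with the package of properties you posit (\BAA, \BAI, $M_{2}(A)$ not \ApA) can exist, and the otherwise correct identification $A\tensor M_{2}(\bC)\cong M_{2}(A)$ proves nothing.

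The paper's route uses the genuine Ghahramani--Read statement. Set $B=A^{\textrm{op}}$ and consider $\Acal=A^{\#}\tensor B^{\#}$, a tensor product of two \BAA\ algebras. If $\Acal$ were \ApA, then passing to the quotient by the closed two-sided ideal $A\tensor B$ and using that \ApAy\ is inherited by quotients and descends from unitizations, one finds that $(1_{A}\otimes B)\oplus(A\otimes 1_{B})\cong B\oplus A$ would be \ApA\ --- contradicting the defining property of the example. To salvage your matrix-algebra idea you would first have to \emph{prove} that $M_{2}(A)$ is not \ApA\ for some \BAA\ algebra $A$ lacking a \BAI; as it stands, that claim is unsupported and carries the entire weight of your proof.
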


\begin{proof}
Let $A$ be the Banach algebra constructed in \cite{GhaR1} such that $A$ is \BAA\ yet $A\oplus A^{\textrm{op}}$ is not \ApA.
For convenience, set $B = A^{\textrm{op}}$.  Adjoin identities $1_A$ to $A$ and $1_B$ to $B$, and set $\Acal = A^{\#} \tensor B^{\#}$.  Then we have the decomposition into closed subspaces:
$$
\Acal = (\bC1_{A}\otimes 1_{B}) +(1_{A}\otimes B) + (A\otimes 1_{B}) + (A\tensor B)\,.
$$
Now $A\tensor B$ is a closed two -sided ideal in $\Acal$, so if $\Acal$ is \ApA,  the quotient algebra
$$
 (\bC1_{A}\otimes 1_{B}) +(1_{A}\otimes B) \oplus (A\otimes 1_{B})\simeq \Big((1_{A}\otimes B) \oplus (A\otimes 1_{B})\Big)^{\#
 }
 $$
 is also \ApA, whence so is $(1_{A}\otimes B) \oplus (A\otimes 1_{B})$ \cite[Proposition 2.4]{GhaLoy}.  The map
$$
( 1_A \otimes B) \oplus (A\otimes 1_B)\to B\oplus A :( 1_A \otimes b) + (a\otimes 1_B)\mapsto (b,a)
$$
is an isometric surjective algebra isomorphism, so  that  $A\oplus B$ is \ApA.  But this  contradicts the specific choice of $A$ and $B$.  Thus $\Acal$ cannot be  \ApA.
\end{proof}


Note that  the argument sheds no light on whether in this case the smaller $A\tensor B$ is \ApA.  

\begin{remark}The same example from \cite{GhaR1} also answers Question 1 raised in \cite[\S9]{GhaLoy}.  Namely $A\oplus B$ is not \ApA, yet the ideal $A$ is \BAA, as is the quotient $B$.
\end{remark}

We now build on this example to give a slightly sharper result. 
\begin{theorem}\label{countertensor}
There exists a unital \BApA\ Banach algebra $\cA$ such that $\cA\tensor \cA$ is not \ApA.
\end{theorem}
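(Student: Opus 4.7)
The plan is to recycle the construction from the proof of Theorem \ref{firstresult}. Let $A$ be the \BAA\ Banach algebra of \cite{GhaR1} (which carries a \BAI), let $B=A^{\textrm{op}}$, and set
\[
\cA \;=\; A^{\#}\oplus B^{\#},
\]
a unital Banach algebra with identity $(1_{A},1_{B})$. The plan is to show (i) $\cA$ is \BAA; and (ii) $A^{\#}\tensor B^{\#}$ appears as a quotient of $\cA\tensor\cA$. Combined with Theorem \ref{firstresult} and the fact that approximate amenability descends to quotients by closed two-sided ideals, this forces $\cA\tensor\cA$ not to be \ApA.

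Step (ii) is the easier half. Expanding the tensor product gives
\[
\cA\tensor\cA \;=\; (A^{\#}\tensor A^{\#}) \oplus (A^{\#}\tensor B^{\#}) \oplus (B^{\#}\tensor A^{\#}) \oplus (B^{\#}\tensor B^{\#}),
\]
and because the summands $A^{\#}$ and $B^{\#}$ of $\cA$ annihilate each other, all cross products among these four tensor summands vanish. Each is therefore a closed two-sided ideal of $\cA\tensor\cA$, and the quotient of $\cA\tensor\cA$ by the sum of any three of them is isometrically algebra-isomorphic to the fourth. Taking the quotient down to $A^{\#}\tensor B^{\#}$ and invoking Theorem \ref{firstresult} (together with the quotient behaviour used in \cite[Proposition 2.4]{GhaLoy}) completes this half.

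Step (i) is the main obstacle. Since $A$ and $B$ are \BAA\ with a \BAI, the unitizations $A^{\#}$ and $B^{\#}$ are themselves \BAA\ by the standard unitization argument (the bounded approximating nets of inner derivations on $A$ extend to $A^{\#}$ with control on the bound, and likewise for $B$). The substantive point is then that the direct sum of two \emph{unital} \BAA\ Banach algebras is again \BAA: the central idempotents $e_{1}=(1_{A},0)$ and $e_{2}=(0,1_{B})$ split every $\cA$-bimodule $X$ into the four sub-bimodules $e_{i}Xe_{j}$, a continuous derivation $\cA\to X^{*}$ splits correspondingly into pieces supported on $A^{\#}$ and $B^{\#}$ (the mixed pieces being annihilated by $e_{1}\cA e_{2}=e_{2}\cA e_{1}=0$), and the uniformly bounded approximating inner derivations on the two summands reassemble into one bounded approximating net for $\cA$. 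Unitality of $A^{\#}$ and $B^{\#}$ is essential here: direct sums of \BAA\ algebras with \BAI\ are in general not even \ApA, which is exactly the $A\oplus A^{\textrm{op}}$ phenomenon underpinning Theorem \ref{firstresult}, so one must lean precisely on the presence of identities in each summand and on the careful tracking of bounds as the two approximating nets are combined.
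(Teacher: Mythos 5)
Your construction and overall strategy coincide with the paper's: take $\cA = A^{\#}\oplus B^{\#}$ with $B=A^{\textrm{op}}$, expand $\cA\tensor\cA$ into the four tensor summands, observe that three of them form a closed two-sided ideal with quotient $A^{\#}\tensor B^{\#}$, and invoke Theorem \ref{firstresult}. The only divergence is in establishing that $\cA$ is \BApA: the paper simply cites \cite[Proposition 6.1]{GhaLoyZh}, whereas you sketch a direct proof via the central idempotents $e_{1},e_{2}$; your sketch is sound (derivations into the mixed pieces $e_{1}Xe_{2}$ and $e_{2}Xe_{1}$ are in fact inner, implemented by $D(e_{1})$, so the reassembly preserves bounds), and is essentially the argument behind the cited proposition.

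One factual correction: the Ghahramani--Read algebra $A$ of \cite{GhaR1} does \emph{not} have a \BAI\ --- indeed \cite[Corollary 3.2]{GhaR1} is invoked elsewhere in this paper precisely to show that \ApA\ algebras need not possess one, and the absence of a \BAI\ is what makes this counterexample compatible with the positive results of the later sections. Fortunately your use of the \BAI\ is superfluous: the passage from $A$ \BAA\ to $A^{\#}$ \BAA\ needs no approximate identity, since $A^{\#}$ is unital and so every $A^{\#}$-bimodule decomposes into a unital part (on which the approximating net for $D|_{A}$ already works, as $ad_{x}(1)=0$) and parts with a trivial one-sided action (into which derivations are inner). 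With that claim deleted, the proof stands as written.
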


\begin{proof}
Let $A$ and $B$ be the \BApA\ algebras as above,  and set  $\cA = A^{\#}\oplus B^{\#}$, \BApA\ by \cite[Proposition 6.1]{GhaLoyZh}.  We have
$$
\Acal\tensor \Acal = (A^{\#}\tensor A^{\#}) \oplus (B^{\#}\tensor B^{\#}) \oplus (B^{\#}\tensor A^{\#}) \oplus(A^{\#}\tensor B^{\#}) \,,
$$
and 
$$
I = (A^{\#}\tensor A^{\#}) \oplus (B^{\#}\tensor B^{\#}) \oplus(B^{\#}\tensor A^{\#}) 
$$
\vskip1mm \noindent
 is a closed two-sided ideal.  The quotient $\Acal\tensor \Acal /I = A^{\#}\tensor B^{\#}$,  which is not \ApA\ from Theorem \ref{firstresult}, so that $\Acal\tensor \Acal $ is not \ApA.
\end{proof}

In comparison, note that since  \BAC\ algebras have bounded approximate identities \cite[Theorem 2.5]{CGZ},  so the direct sum of \BAC\ algebras is \BAC\ by a variant of \cite[Proposition 2.7]{GhaLoy}. 

There is a special situation when \ApAy\ of the tensor product comes for free.

\begin{proposition}\label{fa}
 Let $A$ and $B$ be Banach function algebras on their respective carrier spaces, and suppose that $A$ and $B$ have  bounded approximate identities consisting of elements of finite support.  Then $A\tensor B$ is \ApA.
\end{proposition}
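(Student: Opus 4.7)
The plan is to build an explicit approximate diagonal for $\cA := A \tensor B$ and appeal to the characterisation of approximate amenability of \cite{GhaLoy}: namely, it is enough to exhibit a net $(N_\gamma)$ in $\cA^{\#}\tensor\cA^{\#}$ with $\pi(N_\gamma)\cdot u \to u$ for every $u \in \cA$ and $u \cdot N_\gamma - N_\gamma \cdot u \to 0$ in norm, the standard derivation-to-diagonal passage then supplying the required approximately inner derivations.

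Writing $p_\gamma = e_\alpha \otimes f_\beta$ for $\gamma = (\alpha,\beta)$, the net $(p_\gamma)$ is a \BAI\ of $\cA$ consisting of elements of finite support in the product carrier space $\Omega_A \times \Omega_B$. With $S_\alpha = \mathrm{supp}(e_\alpha)$ and $T_\beta = \mathrm{supp}(f_\beta)$, the subalgebra $\cA_\gamma \subset \cA$ of functions supported in $S_\alpha \times T_\beta$ is finite-dimensional, commutative and semisimple. Point separation in $A$ together with $e_\alpha \in A$ yields, for each $x_i \in S_\alpha$, an element $a \in A$ with $a(x_i)=1$ and $a(x_j)=0$ for $j \neq i$; then $a\cdot e_\alpha$ is a nonzero scalar multiple of $\delta_{x_i}$, so $\delta_{x_i} \in A_\alpha$. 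Similarly each $\delta_{y_j} \in B_\beta$, and hence $\cA_\gamma \cong \bC^{|S_\alpha|\cdot|T_\beta|}$ with basis the point masses $\delta_x \otimes \delta_y$ for $(x,y) \in S_\alpha \times T_\beta$.

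Define
\[
M_\gamma \;=\; \sum_{(x,y) \in S_\alpha \times T_\beta} e_\alpha(x)\,f_\beta(y)\,(\delta_x \otimes \delta_y) \otimes (\delta_x \otimes \delta_y) \;\in\; \cA_\gamma \tensor \cA_\gamma,
\]
so that $\pi(M_\gamma) = p_\gamma$. The identity $u \cdot \delta_z = u(z)\,\delta_z = \delta_z \cdot u$, valid in any Banach function algebra for any $u \in \cA$ and point mass $\delta_z$, gives the strong commutation $u \cdot M_\gamma = M_\gamma \cdot u$ for \emph{every} $u \in \cA$, not merely for $u \in \cA_\gamma$; this is where the commutativity of $A$ and $B$ is essential. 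Setting $N_\gamma = M_\gamma + (1 - p_\gamma) \otimes (1 - p_\gamma) \in \cA^{\#}\tensor\cA^{\#}$, the commutator reduces, using the previous identity and the commutativity of $\cA$, to
\[
u \cdot N_\gamma - N_\gamma \cdot u \;=\; (u - u p_\gamma) \otimes (1 - p_\gamma) - (1 - p_\gamma) \otimes (u - u p_\gamma),
\]
whose norm is at most $2\|u - u p_\gamma\|(1 + \|p_\gamma\|) \to 0$, since $\|p_\gamma\|$ is uniformly bounded and $(p_\gamma)$ is a \BAI. Also $\pi(N_\gamma) = 1 - p_\gamma + p_\gamma^{2}$, and $\pi(N_\gamma)\cdot u \to u$ for every $u \in \cA$, because $p_\gamma^{k}u \to u$ for each $k \geq 1$.

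To close the argument, I would make the standard passage from approximate diagonal to approximately inner derivation: for $D : \cA \to X^{*}$ continuous and any representation $N_\gamma = \sum_i a_i \otimes b_i$, put $\Phi_\gamma = \sum_i a_i \cdot D(b_i) \in X^{*}$ (with $D$ extended to $\cA^{\#}$ by $D(1)=0$). Then $\mathrm{ad}_{\Phi_\gamma}(u) - \pi(N_\gamma)\cdot D(u) = \Phi_\gamma(u\cdot N_\gamma - N_\gamma \cdot u) \to 0$ in norm, while $\pi(N_\gamma)\cdot D(u) \to D(u)$ in the weak${}^{*}$ topology via the \BAI\ action on the dual module; thus every $D$ is weak${}^{*}$-approximately inner, and therefore approximately inner, by the equivalence of these notions for derivations into dual modules noted in the Introduction. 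The main technical obstacle lies in the identification of the point-mass structure of $\cA_\gamma$ in the second paragraph, which combines the BFA point-separation axiom with the finite-support \BAI\ hypothesis; the remainder is algebraic bookkeeping and routine norm estimation.
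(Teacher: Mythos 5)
Your argument is correct in substance but follows a genuinely different route from the paper. The paper's proof is a short reduction: it invokes \cite[Proposition 4.2]{GhaSt} (a Banach function algebra with a \BAI\ of finitely supported elements is \ApA), and the only work is to check that $A\tensor B$ is itself such an algebra --- which requires knowing that $A\tensor B$ is semisimple, obtained from the bounded approximation property of $A$ and $B$ via \cite[\S3.2.18]{Pal}. You instead unpack the mechanism behind that cited result and build the (unbounded) approximate diagonal $N_\gamma = M_\gamma + (1-p_\gamma)\otimes(1-p_\gamma)$ explicitly from the point masses $\delta_x\otimes\delta_y$, which you correctly show lie in $A\tensor B$ and satisfy $u\cdot(\delta_x\otimes\delta_y)=(\delta_x\otimes\delta_y)\cdot u$ for every $u$. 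A real advantage of your construction is that every identity you use holds in the projective tensor product itself, so you never need $A\tensor B$ to be semisimple and you bypass the appeal to the bounded approximation property and Palmer entirely; the cost is length, and that you are essentially re-proving \cite[Proposition 4.2]{GhaSt} rather than quoting it. The one step you state too quickly is the assertion that $\pi(N_\gamma)\cdot D(u)\to D(u)$ weak$^*$ for an \emph{arbitrary} dual bimodule: since $\pi(N_\gamma)=1-p_\gamma+p_\gamma^2$, this amounts to $\langle D(u), x\cdot p_\gamma(1-p_\gamma)\rangle\to 0$, which is clear when $X$ is neo-unital but not for general $X$. The fix is standard and already used in this paper (see the opening of the proof of Theorem \ref{sharp}): since $\cA$ has a \BAI, it suffices to treat neo-unital modules. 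With that reduction inserted, your proof is complete.
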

\begin{proof} That $A$ and $B$ are \ApA\ follows from \cite[Proposition 4.2]{GhaSt}.   Now $A$ and $B$ have the bounded approximation property, so by \cite[\S3.2.18]{Pal} $A\tensor B$ is semisimple, and so is again a Banach function algebra.  It also has a \BAI\ of elements of finite support, built from those of $A$ and $B$, and once more \cite[Proposition 4.2]{GhaSt} applies.
\end{proof}

The same style of argument as above using  compositions can also give some positive results. 

\begin{theorem}  \label{timesplus}
Suppose that $A^{\#}\tensor B^{\#}$ is \ApA.  Then $A$, $B$ and $A\oplus B$ are \ApA.
\end{theorem}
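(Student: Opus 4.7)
The plan is to recover each of $A$, $B$, and $A \oplus B$ as a quotient of $\cA := A^{\#}\tensor B^{\#}$ by an appropriate closed two-sided ideal, and then invoke two already-used facts: approximate amenability passes to quotient Banach algebras, and $X^{\#}$ is \ApA\ if and only if $X$ is \ApA\ (\cite[Proposition 2.4]{GhaLoy}). The crucial input is the Banach space decomposition
$$
\cA = \bC(1_{A}\otimes 1_{B}) \oplus (1_{A}\otimes B) \oplus (A\otimes 1_{B}) \oplus (A\tensor B),
$$
which arises from $A^{\#} = A \oplus \bC 1_A$ and $B^{\#} = B \oplus \bC 1_B$ together with the fact that the projective tensor product distributes over finite direct sums.

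For the $A \oplus B$ assertion, I would first verify that $A\tensor B$ is a closed two-sided ideal of $\cA$ (this uses that $A$ and $B$ are ideals of $A^{\#}$ and $B^{\#}$, respectively, plus density of elementary tensors), and then identify the quotient. In $\cA/(A\tensor B)$ the cross products $(1_A \otimes b)(a\otimes 1_B) = a\otimes b$ and $(a\otimes 1_B)(1_A\otimes b) = a\otimes b$ are zero, so the two subalgebras $1_{A}\otimes B$ and $A\otimes 1_{B}$ annihilate each other. A direct bilinear calculation then shows that the map
$$
(b,a,\lambda)\mapsto (1_{A}\otimes b) + (a\otimes 1_{B}) + \lambda (1_{A}\otimes 1_{B}) + (A\tensor B)
$$
is an isometric algebra isomorphism of $(A\oplus B)^{\#}$ onto $\cA/(A\tensor B)$ (this is essentially the same identification already carried out in the proof of Theorem \ref{firstresult}). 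Passing approximate amenability to the quotient gives that $(A\oplus B)^{\#}$ is \ApA, whence by \cite[Proposition 2.4]{GhaLoy} so is $A\oplus B$.

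For $A$ and $B$ individually, the cleanest route is simply to take the quotient of $A\oplus B$ by the closed two-sided ideal $\{0\}\oplus B$ (respectively $A\oplus\{0\}$), and again invoke that \ApAy\ is preserved by quotients. Alternatively, one can argue directly inside $\cA$: the subspace $A^{\#}\tensor B$ is a closed two-sided ideal (since $B$ is an ideal of $B^{\#}$), and the natural map induced by $B^{\#}/B\cong \bC$ identifies $\cA/(A^{\#}\tensor B)$ with $A^{\#}$; then $A^{\#}$ is \ApA, hence so is $A$ by \cite[Proposition 2.4]{GhaLoy}.

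The only step requiring genuine care is the quotient identification in the middle paragraph; the rest are formal consequences of the standard hereditary behaviour of approximate amenability. In particular, the fact that $A\tensor B$ truly is closed in $\cA$ (rather than only algebraically embedded) relies on the inclusion $A\hookrightarrow A^{\#}$ and $B\hookrightarrow B^{\#}$ being complemented isometric embeddings, so that the induced map on projective tensor products is an isometric embedding onto a closed subspace. Once this is in place, the argument runs as a direct parallel to the proof of Theorem \ref{firstresult}, but with the logical direction reversed.
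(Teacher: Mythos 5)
Your proposal is correct and follows essentially the same route as the paper: the same decomposition of $A^{\#}\tensor B^{\#}$, the identification of the quotient by the closed ideal $A\tensor B$ with $(A\oplus B)^{\#}$, and the descent to $A\oplus B$ via \cite[Proposition 2.4]{GhaLoy}. The only cosmetic difference is that the paper obtains $A$ and $B$ first via the character-induced epimorphism $a\otimes b\mapsto\varphi(a)b$ onto $B^{\#}$ (which is just the quotient by $A\tensor B^{\#}$, the mirror of your $A^{\#}\tensor B$ argument), whereas you derive them afterwards from $A\oplus B$; both are the same standard quotient hereditary property.
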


\begin{proof}
The algebra $A^{\#}$ admits a character $\varphi$, and $a\tensor b\mapsto \varphi(a)b$ defines an epimorphism  $A^{\#}\tensor B^{\#}\to B^{\#}$ so that $B^{\#}$, and hence $B$, is \ApA.  Similarly for $A$. 

We have the decomposition into closed subalgebras,
$$
A^{\#}\tensor B^{\#}= (\bC 1_A \otimes  1_B) + (1_A \otimes B) + (A\otimes 1_B) + (A\tensor B)\,.
$$
Here $A\tensor B$ is a closed ideal, with \ApA\ quotient 
$$
 (\bC 1_A \otimes  1_B) + (1_A \otimes A)\oplus (B\otimes 1_B)
 $$
 having zero product between the second and third summands. But this latter is isomorphic to the unitization of $A\oplus B$.
 
\vskip-\baselineskip
 \end{proof}
 
The obvious omission here is whether $A\tensor B$ is \ApA.   This is certainly the case under an additional hypothesis.


\begin{theorem}  \label{sharp}
Suppose that $A^{\#} \tensor B^{\#}$ is (boundedly) \ApA\ and that $A$ and $B$ have bounded approximate identities.  Then $A\tensor B$ is (boundedly)  \ApA.
\end{theorem}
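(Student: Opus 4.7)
The strategy is to recognise $A\tensor B$ as a closed two-sided ideal of $A^{\#}\tensor B^{\#}$ that carries a bounded approximate identity, and then invoke the standard hereditary principle that a closed ideal with a \BAI\ in a (boundedly) \ApA\ Banach algebra is itself (boundedly) \ApA.

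From the decomposition
\[
A^{\#}\tensor B^{\#}= \bC(1_A \otimes 1_B) + (1_A \otimes B) + (A\otimes 1_B) + (A\tensor B),
\]
already used in the proof of Theorem~\ref{firstresult}, the summand $A\tensor B$ is a closed two-sided ideal of $A^{\#}\tensor B^{\#}$. If $(e_\alpha)$ and $(f_\beta)$ are bounded approximate identities for $A$ and $B$ of bounds $M_A$ and $M_B$ respectively, then $(e_\alpha\otimes f_\beta)$, directed by the product ordering, is a bounded approximate identity for $A\tensor B$ of bound at most $M_A M_B$: convergence $(e_\alpha\otimes f_\beta)(a\otimes b) = (e_\alpha a)\otimes(f_\beta b)\to a\otimes b$ is clear on elementary tensors, and the uniform bound combined with density of elementary tensors extends the convergence to all of $A\tensor B$.

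The conclusion then follows from the hereditary principle. Given a continuous derivation $D:A\tensor B\to X^{*}$ for a Banach $(A\tensor B)$-bimodule $X$, one uses the BAI via Cohen factorisation to pass to the pseudo-unital closed submodule $Y=(A\tensor B)\cdot X\cdot (A\tensor B)$; its dual $Y^{*}$ carries a canonical $A^{\#}\tensor B^{\#}$-bimodule action extending the $(A\tensor B)$-action, and $D$ extends naturally to a continuous derivation $\widetilde D:A^{\#}\tensor B^{\#}\to Y^{*}$. The hypothesis on $A^{\#}\tensor B^{\#}$ produces approximating inner derivations for $\widetilde D$ whose restrictions to $A\tensor B$ approximate $D$ (after identifying $D$ with its composition with the restriction $X^{*}\to Y^{*}$).

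The principal technical point is the propagation of bounds through this extension--restriction procedure in the boundedly approximately amenable case: one must verify that the uniform bound on the approximating inner derivations is controlled by $\|D\|$, $M_A M_B$, and the constant from boundedness in $A^{\#}\tensor B^{\#}$. This is routine bookkeeping along the lines of the extension lemmas already used in the papers cited in the excerpt, but is where care is required to obtain the bounded version of the conclusion.
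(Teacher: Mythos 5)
Your proposal is correct and follows essentially the same route as the paper: both use the bounded approximate identity of $A\tensor B$ to reduce to (duals of) essential bimodules and then extend the derivation from the ideal $A\tensor B$ to the unital algebra $A^{\#}\tensor B^{\#}$, restricting the resulting approximating inner derivations back. The only cosmetic difference is that the paper phrases the extension via the double centralizer algebra $\Delta(A\tensor B)$, whereas you invoke the ideal-with-\BAI\ hereditary principle of \cite[Corollary 2.3]{GhaLoy} directly --- precisely the alternative the paper itself notes in the remark following its proof.
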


\begin{proof}
Since $A\tensor B$ has a bounded approximate identity, it suffices to show that for every essential Banach $A\tensor B$-bimodule $X$, continuous derivations from $A\tensor B$ into $X^*$ are approximately inner.

Let $D: A\tensor B\to X^{*}$ be a continuous derivation.  Then $D$ extends uniquely to a derivation $\widehat{D} : \Delta(A\tensor B)\to X^*$,  where $ \Delta(A\tensor B)$ is the double centralizer algebra of $A\tensor B$,  \cite{John1, John}.  Then restrict $\widehat{D}$ to $A^{\#} \tensor B^{\#}$.  By hypothesis this restriction is approximately inner, a fortiori, so is $D$. 

\end{proof}

\begin{remark}

An alternate proof would be to use the argument of \cite[Corollary 2.3]{GhaLoy}.   \end{remark}

\begin{remark}

1.  A possibly related question is whether $c_0(A)$ is \ApA\ given that $A$ is \ApA. The argument of \cite[Example 6.1]{GhaLoy} shows that $c_0(A^{\#})$ will be \ApA.  For the algebra $\cA$ of \cite{GhaR1}, $c_0(\cA)$ is again of the specified form of \cite[Theorem 3.1]{GhaR1}, and so is  \ApA. The more general question as to whether  $c_0(A_n)$ is \ApA, where the $(A_n)$ are \ApA, has a negative answer in general, as shown by the example $\cA\oplus\cA^{\textrm{op}}$ of \cite{GhaR1}.   

2.   Note that $c_0\tensor A$ will be \ApA\ if $A$ is \BAA\ and has a \BAI\ (Proposition \ref{GLcorr}).  For more general $A$ the question is open.  Of course there is a natural homomorphism $c_0\tensor A \to c_0(A)$ determined by $ (\alpha_n)\otimes x \mapsto (\alpha_n x)$.  Since elements of the range are summable sequences of elements of  $A$, the homomorphism has properly dense range.  Supposing that $c_0 \tensor A$ is \ApA\ it is not known whether $c_0(A)$ must be \ApA. However the epimorphism $c_0\tensor A \to A\oplus A$ determined by $ (\alpha_n)\otimes x \mapsto (\alpha_1 x, \alpha_2 x)$ shows that $A\oplus A$ would be.

\end{remark}

\section{Semi-inner derivations}\label{new method}
We first introduce a new notion which will come up in later arguments of this section.  The concept itself is not new, but the variant we wish to use seems to be.
\begin{definition}
Let $A$ be a algebra, $X$ an $A$-bimodule.  A derivation $D:A\to X$ is \emph{semi-inner}\footnote{Such maps, without the derivation condition, are called generalized inner, or  `generalized inner derivations' in the  literature \cite{AM, Br, CM}.  We require the approximate version, and view `approximately generalized' as an oxymoron, and so will use `semi-inner', but only for derivations.} if there are $m, n\in X$ such that 
$$
D(a) = a\cdot m - n\cdot a \qquad (a\in A)\,.
$$
More generally, for $A$ a Banach algebra, $X$ a Banach $A$-bimodule, a continuous derivation  $D:A\to X$ is \emph{approximately semi-inner} if there are nets $(m_i), (n_i)$ in $X$ with
$$
D(a) = \lim_i(a\cdot m_i - n_i\cdot a) \qquad (a\in A)\,.
$$
\end{definition}
 
Here $m$ and $n$ may be distinct but are highly constrained.  The derivation identity  shows that  if $D$ is a  semi-inner  derivation then
  $$
 a\cdot (m - n)\cdot b = 0\qquad (a, b\in A)\,.
 $$
Thus in the Banach case, with $D :A\to X^*$, then $m=n$ if $X$ is neo-unital, and we have an inner derivation.  In the approximately semi-inner case
$$
\lim_i \big(a\cdot (m_i - n_i)\cdot b\big) = 0 \qquad (a, b\in A)\,,
$$
and for $X$ neo-unital this latter gives $m_i - n_i\to 0$ weak$^*$, so that $D$ is in fact weak$^*$ approximately inner.
\begin{example}
The algebra $\ell^2$ under pointwise operations is not \ApA, \cite{DLZ, CG}.  It is, however, approximately semi-amenable.
For let $D:\ell^2 \to X$ be a continuous derivation into a  bimodule. 
Set $(E_n)$ to be the standard (unbounded) approximate identity of $\ell^2$.  Then $D_n : E_n\ell^2\to X$ is a derivation from a finite-dimensional semisimple algebra and hence is inner, say implemented by $\xi_n\in X$.  Thus for $a\in \ell^2$,
\begin{eqnarray*}
&&D(a) = \lim_n D(E_n a) = \lim_n (E_n a\cdot \xi_n -\xi_n\cdot E_n a)\\
&&\hskip 0.868cm = \lim\big( a\cdot (E_n\cdot \xi_n) - (\xi_n\cdot E_n)\cdot a\big)\,,
\end{eqnarray*}
and so $D$ is approximately semi-inner.
 \end{example}

\begin{theorem}\label{basic method}
Suppose that $A\tensor B$ is one of
\begin{romanenumerate}
\item \ApA\,,
\item \BAA\,,
\item \BAC\,.
\end{romanenumerate}
Then for any continuous derivation $D$ from $A$ or $B$ 
\begin{itemize}
\item into any  bimodule is approximately semi-inner in case (i)\,,
\item into any dual bimodule is boundedly  approximately semi-inner in case (ii)\,,
\item into any bimodule is boundedly approximately semi-inner in case (iii)\,.
\end{itemize}
\end{theorem}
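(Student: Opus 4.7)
The plan is as follows. Given a continuous derivation $D \colon A \to X$ (or $D \colon A \to X^*$ in case (ii)), I extend $D$ to a continuous derivation $\widetilde{D}$ from $A \tensor B$ into an appropriately chosen $A \tensor B$-bimodule, invoke the hypothesis on $A \tensor B$ to obtain a net $(\eta_\alpha)$ inwardly approximating $\widetilde{D}$, and slice this net back down to $X$ (or $X^*$) using a fixed pair $(b_0, \varphi_0) \in B \times B^*$ with $\varphi_0(b_0) = 1$. Such $(b_0, \varphi_0)$ exist unless $B = \{0\}$, which is trivial; the derivations from $B$ are handled symmetrically by interchanging the roles of $A$ and $B$.

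For cases (i) and (iii) I use the $A \tensor B$-bimodule $X \tensor B$ with the natural actions $(a \otimes b) \cdot (x \otimes b') = (a \cdot x) \otimes (b b')$ and the symmetric right action, and set $\widetilde{D}(a \otimes b) := D(a) \otimes b$, which a routine check on elementary tensors shows to be a continuous derivation of norm at most $\|D\|$. Case (i) then yields, via the known equivalence of \ApAy\ and \AC, a net $(\eta_\alpha) \subset X \tensor B$ with $\widetilde{D}(c) = \lim_\alpha (c \cdot \eta_\alpha - \eta_\alpha \cdot c)$ pointwise; case (iii) yields the same with the additional property $\sup_\alpha \|ad_{\eta_\alpha}\| < \infty$. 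Applying the slice $\operatorname{id} \otimes \varphi_0 \colon X \tensor B \to X$ to the identity $\widetilde{D}(a \otimes b_0) = \lim_\alpha (a \otimes b_0) \cdot \eta_\alpha - \eta_\alpha \cdot (a \otimes b_0)$ converts the left-hand side into $D(a)$ (since $\varphi_0(b_0) = 1$) and the right-hand side into $a \cdot m_\alpha - n_\alpha \cdot a$, where $m_\alpha := (\operatorname{id} \otimes (\varphi_0 \cdot b_0))(\eta_\alpha)$ and $n_\alpha := (\operatorname{id} \otimes (b_0 \cdot \varphi_0))(\eta_\alpha)$ lie in $X$; in case (iii) the uniform norm bound $\|\varphi_0\|\|b_0\|\sup_\alpha\|ad_{\eta_\alpha}\|$ propagates directly to the semi-inner operators.

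Case (ii) requires a slightly modified target, since $X^* \tensor B$ is not evidently a dual bimodule. I equip $X \tensor B^*$ with the $A \tensor B$-bimodule structure $(a \otimes b) \cdot (x \otimes \varphi) = (a \cdot x) \otimes (b \cdot \varphi)$ (with $B$ acting on $B^*$ dually), and take $Y^* := (X \tensor B^*)^*$ as target. The pairing $(y \otimes b)(x \otimes \varphi) := y(x)\varphi(b)$ gives a bounded $A \tensor B$-bimodule embedding $X^* \tensor B \hookrightarrow Y^*$, which is the mildly delicate point but follows from a short direct check against the dual actions. Then $\widetilde{D}(a \otimes b) := D(a) \otimes b$ is a continuous derivation into the dual bimodule $Y^*$, \BAAy\ furnishes a net $(\eta_\alpha) \subset Y^*$ with $\sup_\alpha \|ad_{\eta_\alpha}\| < \infty$ and $\widetilde{D} = \lim_\alpha ad_{\eta_\alpha}$ strong-operator, and the slice $S_{\varphi_0} \colon T \mapsto T(\cdot \otimes \varphi_0)$ applied at $a \otimes b_0$ recovers $D(a)$ on the left and $a \cdot m_\alpha - n_\alpha \cdot a \in X^*$ on the right, where $m_\alpha(x) := \eta_\alpha(x \otimes (\varphi_0 \cdot b_0))$ and $n_\alpha(x) := \eta_\alpha(x \otimes (b_0 \cdot \varphi_0))$; the norm bound then propagates as before, and the remainder of the argument is routine algebraic bookkeeping.
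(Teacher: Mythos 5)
Your argument for clauses (i) and (iii) is the paper's own: the same bimodule structure on $X\tensor B$, the same extended derivation $a\otimes b\mapsto D(a)\otimes b$, and the same slice by a pair $(b_0,\varphi_0)$ with $\varphi_0(b_0)=1$, producing exactly the paper's $m'_i=\sum_k\langle \varphi_0, b_0b_{k,i}\rangle x_{k,i}$ and $n'_i=\sum_k\langle \varphi_0, b_{k,i}b_0\rangle x_{k,i}$. Where you genuinely diverge is clause (ii). The paper, noting that $X^*\tensor B$ need not be a dual module, pushes the extended derivation into $(X^*\tensor B)^{**}$ and then slices back via $T=S^*$, which forces it to represent a general element of the bidual as a weak$^*$ limit of a bounded net from $X^*\tensor B$ and to extract weak$^*$ limit points $\xi^*,\eta^*$ depending on each $m_i$. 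You instead take the dual bimodule $(X\tensor B^*)^*$ (with $B$ acting dually on $B^*$) as target and send $D(a)\otimes b$ to the functional $x\otimes\varphi\mapsto \langle D(a),x\rangle\langle\varphi, b\rangle$; the slices $m_\alpha(x)=\eta_\alpha(x\otimes(\varphi_0\cdot b_0))$, $n_\alpha(x)=\eta_\alpha(x\otimes(b_0\cdot\varphi_0))$ are then given by explicit bounded linear maps, with the norm bound propagating immediately. This is cleaner: it avoids the bidual, Goldstine-type approximation, and the choice of limit points, while delivering the same conclusion with the same constant up to $\|\varphi_0\|\,\|b_0\|$. One small caveat: calling the canonical map $X^*\tensor B\to (X\tensor B^*)^*$ an ``embedding'' is more than you need and more than is true in general (injectivity can fail without the approximation property); all your argument uses is that it is a contractive $A\tensor B$-bimodule morphism, which your pairing check establishes, so the derivation property of the composite is inherited and the proof stands.
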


\begin{proof}

Given a Banach $A$-bimodule $X$, we make $X\tensor B$ into a Banach $A\tensor B$-bimodule as follows:  for $a\in A, b_1\in B, b_2\in B, x \in X$,
$$
(a\otimes b_1)\cdot (x \otimes b_2) = a\cdot x\otimes b_1b_2\,,\qquad 
(x\otimes b_2)\cdot (a \otimes b_1) = x\cdot a\otimes b_2b_1\,.
$$
Given a continuous derivation $D:A\to X$, we define $\Delta : A\tensor B\to  X\tensor B$ by setting
$$
\Delta(a\otimes b) = D(a)\otimes b\qquad (a\in A, b\in B)\,.
$$
Then
\begin{eqnarray*}
\Delta((a_1\otimes b_1)(a_2\otimes b_2)) &=& \Delta(a_1a_2\otimes b_1b_2)\\
&=&\big(D(a_1)\cdot a_2 + a_1\cdot D(a_2)\big)\otimes (b_1b_2)\\
&=&\big((D(a_1)\cdot a_2)\otimes b_1b_2\big) + \big((a_1\cdot D(a_2))\otimes b_1b_2\big)\\
&=&\big((D(a_1)\otimes b_1)\cdot (a_2\otimes b_2)\big) + \big((a_1\otimes b_1)\cdot (D(a_2)\otimes b_2\big)\,,
\end{eqnarray*}
so that $\Delta$ is a derivation.

In clause (i), since approximate amenability and approximate contractibility  coincide, \cite[Proposition 2.1]{GhaLoyZh},  there is a net $(m_i)$ in $X\tensor B$ such that for all $a\in A, b\in B$,
\begin{equation}\label{start}
\Delta(a\otimes b) = \lim_i \Big((a\otimes b)\cdot m_i - m_i\cdot (a\otimes b)\Big)\,.
\end{equation}
Let
$$
m_i = \sum_{k=1}^\infty x_{k, i} \otimes b_{k, i}\,,
$$
where $ x_{k, i}\in X, b_{k, i}\in B$.  Then
\begin{eqnarray}\label{exp10}
&&\kern -1cm D(a)\otimes b = \Delta(a\otimes b)\nonumber\\ 
&&= \lim_i\left(\sum_k (a\cdot x_{k, i}) \otimes bb_{k, i} -  \sum_k  (x_{k, i}\cdot a) \otimes b_{k, i}b\right)\,.
\end{eqnarray}

Fix $b_0\in B$ non-zero, and take $b_0^*\in B^*$ with $\langle b_0^*,b_0\rangle = 1$.  Applying the operator $T:X\tensor B \to X$ specified by  $T(x\otimes b)= \langle b_0^*, b\rangle x$ to both sides of \eqref{exp10} yields
  \begin{equation}\label{net10}
D(a)= \lim_i (a\cdot m'_i - n'_i\cdot a)\,,
 \end{equation}
where  $m'_i =\sum_k \langle b_0^*, b_0b_{k, i}\rangle x_{k, i}$, $n'_i =  \sum_k\langle b_0^*, b_{k, i}b_0\rangle x_{k, i}$.
\vskip2mm

 In clause (iii), the same argument with the extra condition that
 $$
 \|(a\otimes b)\cdot m_i - m_i\cdot (a\otimes b)\|\leq K\|a\| \|b\|\,.
 $$
yields
$$
\|a\cdot m'_i - n'_i\cdot a\|\leq K'\|a\|\,.
 $$

For clause (ii),  let $D:A\to X^*$ be a continuous derivation into a dual bimodule. Since $X^*\tensor B$ is unlikely to be a dual space, let alone a dual module,  view the derivation $\Delta$  as mapping into $(X^*\tensor B)^{**}$.  Then there is a net $(m_i)$ in $(X^*\tensor B)^{**}$ and a constant $K > 0$ such that for $a\in A, b\in B$,
\begin{equation}
D(a)\otimes b = \Delta(a\otimes b) = \lim_i \Big((a\otimes b)\cdot m_i - m_i\cdot (a\otimes b)\Big)\,,\qquad  \label{eq100}
\end{equation}
and
\begin{equation}
\|(a\otimes b)\cdot m_i - m_i\cdot (a\otimes b) \|\leq K \|a\|\ \|b\|\,.\qquad\qquad\ \ \label{eq200}
\end{equation}

Fix $b_0\in B$ of unit norm and take $b_0^*\in B^*$  with $b_0^*(b_0) = 1$. Let $S :X\to (X^*\tensor B)^*$ be specified by
$$
\langle S(x),  x^*\otimes b\rangle =  \langle x^*, x\rangle \langle b_0^*, b\rangle\,,\qquad (x\in X, b\in B)\,,
$$
and set $T = S^*:  (X^*\tensor B)^{**}\to X^*$.  Now take $m\in  (X^*\tensor B)^{**}, a\in A, b\in B$, and $x\in X$.  Then 
$$
\langle T((a\otimes b)\cdot m), x\rangle = \langle (a\otimes b)\cdot m, S(x)\rangle = \langle m,  S(x)\cdot (a\otimes b)\rangle\,.
$$
For $x^*\in X^*$ and $c\in B$,
\begin{align}
\langle S(x)\cdot (a\otimes b_0), x^*\otimes c\rangle &= \langle S(x), (a\otimes b_0)\cdot (x^*\otimes c)\rangle\qquad\\
 &=  \langle S(x), a\cdot x^* \otimes b_0c\rangle = \langle a\cdot x^*, x\rangle \langle b_0^*, b_0c\rangle\,.
\end{align}
Thus, setting  $m= \sum_k x_k^*\otimes b_k$, and  $x^{*}(m) =  \sum_k \langle b_0^*, b_0 b_k\rangle  x_k^* $\,,
$$
 T((a\otimes b_0)\cdot m) =  \sum_k \langle b_0^*, b_0 b_k\rangle a \cdot x_k^* = a\cdot x^{*}(m)\,,
 $$
where we have the estimate
$$
\|x^{*}(m)\| \leq \|b_{0}\|\,\|b_{0}^{*}\|\,\|m\|\,.
$$

 A general $m\in(X^*\tensor B)^{**}$ is the weak$^*$-limit of a net $(\mu_{\alpha})\subset X^{*}\tensor B$, bounded by $\|m\|$, and as an adjoint $T$ is weak$^*$ to weak$^*$-continuous.  It  follows that the associated net $(x^*(\mu_{\alpha}))$  is bounded and so has a  weak$^*$ limit point  $\xi^*\in X^*$ (depending on $m$) which satisfies
\begin{equation}\label{xi10}
T((a\otimes b_0) \cdot m) = a\cdot \xi^*\qquad (a\in A)\,.
\end{equation}
Similarly, there is $\eta^*\in X^*$ with
\begin{equation}\label{eta100}
T(m\cdot (a\otimes b_0)) = \eta^*\cdot a \qquad (a\in A)\,.
\end{equation}
Applying $T$ to \eqref{eq100} and \eqref{eq200} with $b = b_{0}$, gives nets $(m'_i)$ and $(n'_i)$ in $X^*$ with
\begin{align}
&& D(a) = \lim_i (a\cdot m'_i - n'_i\cdot a) \qquad (a\in A)\,,\qquad\quad\ \ \label{eq101}\\
&&\|a\cdot m'_i - n'_i\cdot a \|\leq K\|T\|\ \|a\| \qquad (a\in A)\,.\qquad\label{eq102}
\end{align}
\end{proof}

To get beyond semi-inner we first observe that if
\begin{equation}
D(a)= \lim_i (a\cdot m'_i - n'_i\cdot a)\qquad (a\in A)\,,
 \end{equation}
and $D$ is a continuous derivation, then
for $a_1, a_2 \in A$, 
\begin{eqnarray}\label{prod15}
&&D(a_1 a_2) = D(a_1)a_2 + a_1 D(a_2)\nonumber \\
&&\hskip2cm  = \lim_i\Big[(a_1\cdot m'_i - n'_i\cdot a_1) a_2 + a_1(a_2\cdot m'_i - n'_i\cdot a_2) \Big]
\end{eqnarray}
and 
\begin{equation}\label{prod25}
D(a_1 a_2) =  \lim_i(a_1a_2\cdot m'_i - n'_i\cdot a_1a_2)\,.
\end{equation}
Comparing \eqref{prod15} and \eqref{prod25} yields 
\begin{equation}\label{limits}
    \lim_i(a_1\cdot(m'_i - n'_i)\cdot a_2) = 0\,.    
\end{equation}
Moreover, in the ``bounded'' case, we have
\begin{equation}\label{bounds}
\|a_{1}\cdot (m'_{i} - n'_{i})\cdot a_{2}\|\leq 3K\|a_{1}\|\cdot \|a_{2}\|\,.
\end{equation}

We can now look at conditions that enable us show that $m'_i = n'_i$, or at least $m'_i-n'_i\to 0$.

\begin{theorem}\label{BEJ condition}
 Suppose that $A\tensor B$ is \ApA\ (respectively \BAA, \BAC).  If $B$ has an element $b_0$ with $b_0\not\in\{b_0b - bb_0 : b\in B\}\overline{\phantom{I}}$, then $A$ is \ApA\ (respectively \BAA,  \BAC).
  \end{theorem}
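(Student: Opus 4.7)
The strategy is to combine Theorem \ref{basic method} with a judicious choice of the auxiliary functional $b_0^*$ used in its proof, so that the approximately semi-inner derivations it produces become genuinely approximately inner. The hypothesis $b_0 \notin \overline{\{b_0 b - b b_0 : b \in B\}}$, together with the fact that this set is a linear subspace of $B$, lets me invoke Hahn--Banach to obtain $b_0^* \in B^*$ with $\langle b_0^*, b_0 \rangle = 1$ and
$$
\langle b_0^*, b_0 b \rangle = \langle b_0^*, b b_0 \rangle \qquad (b \in B)\,.
$$

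I then plug this specific $b_0^*$ into the proof of Theorem \ref{basic method}. In clauses (i) and (iii), given a continuous derivation $D\colon A \to X$ and writing $m_i = \sum_k x_{k,i}\otimes b_{k,i}$ as in that proof, the two nets
$$
m'_i = \sum_k \langle b_0^*, b_0 b_{k,i}\rangle\, x_{k,i}\,,\qquad n'_i = \sum_k \langle b_0^*, b_{k,i} b_0\rangle\, x_{k,i}
$$
coincide term by term, so $m'_i = n'_i$. Equation \eqref{net10} then exhibits $D$ as approximately inner, and the bound \eqref{bounds} delivers the bounded conclusion in case (iii). The non-uniqueness of the series representation is harmless, since the two bounded operators $X \tensor B \to X$ induced by $b \mapsto \langle b_0^*, b_0 b\rangle$ and $b \mapsto \langle b_0^*, b b_0\rangle$ in the second coordinate are literally equal as maps on $B$.

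For clause (ii), given a continuous derivation $D\colon A \to X^*$, the corresponding elements $\xi^*, \eta^* \in X^*$ arise as weak$^*$ limit points of the nets $(x^*(\mu_\alpha))$ and $(y^*(\mu_\alpha))$ for a common bounded net $(\mu_\alpha) \subset X^*\tensor B$ with $\mu_\alpha \to m_i$ weak$^*$ in $(X^*\tensor B)^{**}$. Each $\mu_\alpha$ lies in $X^*\tensor B$, so the term-by-term argument gives $x^*(\mu_\alpha) = y^*(\mu_\alpha)$ for every $\alpha$ --- the two nets are in fact identical. Any common weak$^*$ limit point therefore witnesses $\xi^* = \eta^*$, hence $m'_i = n'_i$, and \eqref{eq101}--\eqref{eq102} complete the boundedly approximately inner conclusion. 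The only place requiring real care is this weak$^*$ passage in case (ii); once it is handled the rest is mechanical.
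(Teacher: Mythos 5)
Your proof is correct and takes essentially the same route as the paper's: use Hahn--Banach to pick $b_0^*$ annihilating the closed subspace $\overline{\{b_0b - bb_0 : b\in B\}}$ with $\langle b_0^*, b_0\rangle = 1$, so that the two nets $(m'_i)$ and $(n'_i)$ produced in the proof of Theorem \ref{basic method} coincide and the semi-inner conclusion upgrades to (boundedly) approximately inner. The paper disposes of this in one line; your extra care with the well-definedness of the induced operators and the weak$^*$ passage in clause (ii) merely makes explicit what the paper leaves implicit.
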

  
  \begin{proof}
  Choose the functional $b^{*}_{0}$ in the proof of Theorem \ref{basic method} to vanish on $\{b_0b - bb_0 : b\in B\}$.  Then the resulting nets $(m'_{i})$ and $(n'_{i})$  are the same.  Hence the result.
 \end{proof}

 Natural conditions on $B$ which are sufficient for the above condition are listed in \cite{John2}. Note that there is unfortunately no conclusion about \ApAy\ of $B$.  Of course in special situations more can be said. \\
 
 Throughout the next theorem $ G$ is a locally compact group and $L^{1}(G)$ is the usual group algebra of $G$. 
 
 \begin{theorem}\label{groupalg}
Suppose that $L^{1}(G)\tensor A$ is  \ApA\ (respectively  (boundedly) \ApA). Then $G$ is amenable and $A$ is  \ApA\ (respectively  boundedly \ApA).  Conversely, if $G$ is amenable and $A$ is boundedly \ApA\ with a \BAI, then  $L^{1}(G)\tensor A$ is boundedly \ApA.
\end{theorem}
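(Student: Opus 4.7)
The plan is to establish the three assertions of the theorem separately.

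To show $A$ is \ApA\ (respectively boundedly \ApA), I would invoke Theorem \ref{BEJ condition} with the role of its ``$B$'' played by $L^{1}(G)$. The augmentation character $\varphi_{0}\colon f\mapsto \int_{G}f$ is a character on $L^{1}(G)$, hence vanishes on every commutator $b_{0}*b - b*b_{0}$. Choosing any $b_{0}\in L^{1}(G)$ with $\varphi_{0}(b_{0})\neq 0$ places $b_{0}$ outside $\overline{\{b_{0}*b - b*b_{0}:b\in L^{1}(G)\}}$, so the hypothesis of Theorem \ref{BEJ condition} is satisfied.

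For amenability of $G$, I would show that $L^{1}(G)$ is itself \ApA\ (respectively boundedly \ApA) and then invoke the known equivalence between \ApAy\ of $L^{1}(G)$ and amenability of $G$. Since $L^{1}(G)$ has a \BAI, the standard neo-unitality reduction together with the equivalence of weak$^{*}$-\ApAy\ with \ApAy\ from \cite{GhaLoyZh} (recalled in the introduction) means it suffices to prove that every continuous derivation $D\colon L^{1}(G)\to X^{*}$ into a neo-unital dual module is weak$^{*}$-approximately inner. Theorem \ref{basic method} applied with $L^{1}(G)$ in the role of ``$A$'' yields nets $(m'_{i}), (n'_{i})\subset X^{*}$ with
\[
D(a)=\lim_{i}(a\cdot m'_{i}-n'_{i}\cdot a) \qquad (a\in L^{1}(G)),
\]
and the derivation-identity computation \eqref{limits} forces $a_{1}\cdot (m'_{i}-n'_{i})\cdot a_{2}\to 0$ for every $a_{1},a_{2}\in L^{1}(G)$. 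Neo-unitality of $X$, coupled with Cohen factorization, lets us express each $x\in X$ as $a_{2}\cdot y\cdot a_{1}$, giving
\[
\langle m'_{i}-n'_{i},\,x\rangle = \langle a_{1}\cdot (m'_{i}-n'_{i})\cdot a_{2},\,y\rangle\to 0,
\]
so that $m'_{i}-n'_{i}\to 0$ weak$^{*}$. Consequently $D(a)=\lim_{i}(a\cdot m'_{i}-m'_{i}\cdot a)$ weak$^{*}$, which is the required weak$^{*}$-approximate innerness. In the bounded case the uniform norm control from \eqref{eq102} survives the same argument.

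For the converse, amenability of $G$ gives that $L^{1}(G)$ is amenable (Johnson's theorem). Theorem \ref{GLcorr} applied with $A$ in the role of its ``$A$'' (boundedly \ApA, with a \BAI) and $L^{1}(G)$ in the role of its ``$B$'' (amenable) immediately yields that $A\tensor L^{1}(G)\cong L^{1}(G)\tensor A$ is boundedly \ApA. The principal difficulty lies in the middle step: converting Theorem \ref{basic method}'s ``approximately semi-inner'' conclusion into a genuine weak$^{*}$-approximately inner representation. The \BAI\ of $L^{1}(G)$ is what permits both the reduction to neo-unital modules and the Cohen factorization that drives $m'_{i}-n'_{i}$ to $0$ weak$^{*}$.
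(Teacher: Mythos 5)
Your treatment of the first assertion (via Theorem \ref{BEJ condition} applied with $B=L^{1}(G)$ and the augmentation character) and of the converse (Johnson's theorem plus Theorem \ref{GLcorr}) is sound; the first differs from the paper, which simply observes that $f\otimes a\mapsto \Lambda(f)a$ is a continuous epimorphism of $L^{1}(G)\tensor A$ onto $A$, but both routes work. The problem is the middle step, amenability of $G$.

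In the bounded case your argument is essentially the paper's Theorem \ref{AMBAI} applied to $L^{1}(G)$, and it goes through. In the unbounded case, however, the step you describe as ``the standard neo-unitality reduction'' is exactly where the known obstruction sits. From Theorem \ref{basic method}(i) and \eqref{limits} you correctly get, for a \emph{neo-unital} dual module, that $m'_{i}-n'_{i}\to 0$ in $\sigma((X_{\mathrm{ess}})^{*},X_{\mathrm{ess}})$, hence that $D$ is weak$^{*}$-approximately inner there. But to pass to a general dual module $Y^{*}$ one decomposes $Y^{*}=EFY^{*}\oplus\cdots$ and must convert weak$^{*}$-approximate innerness of $D_{1}$ relative to $\sigma((Y_{\mathrm{ess}})^{*},Y_{\mathrm{ess}})$ into weak$^{*}$-approximate innerness relative to $\sigma(Y^{*},Y)$; evaluating $EF\tilde{\xi}$ at a general $y\in Y$ involves iterated limits over the approximate identity, and these cannot be interchanged with the limit in $i$ unless the net $(m'_{i}-n'_{i})$ is bounded. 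This is precisely why Lemma \ref{esstrick3} hypothesises \emph{boundedly} weak$^{*}$-approximately inner (see the remark following it), and why Theorem \ref{AMBAI} assumes \BApA\ while Theorem \ref{AMCBAI} assumes a \emph{central} \BAI\ --- which $L^{1}(G)$ does not possess for general $G$. So your argument does not establish amenability of $G$ when $L^{1}(G)\tensor A$ is merely \ApA. The paper avoids derivations entirely for this step: the augmentation ideal $I_{0}=\ker\Lambda$ gives the complemented ideal $I_{0}\tensor A$ of $L^{1}(G)\tensor A$, which therefore has a left approximate identity by \cite[Corollary 2.4]{GhaLoy}; hence $I_{0}$ has one by \cite[Theorem 8.2]{DorW}, and $G$ is amenable by \cite[Theorem 5.2]{Willis}.
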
 

\begin{proof} Let $\Lambda:f\mapsto \int_{G} f$ be the augmentation character on $L^{1}(G)$.  Then $T: f\otimes a\mapsto \Lambda(f)a$ gives a continuous epimorphism of $L^{1}(G)\tensor A$ onto $A$.  Thus $A$ is  \ApA\ (respectively  boundedly \ApA).

Let $I_{0} =\textrm{Ker} \Lambda$.  Since $I_{0}\tensor A$ is a complemented ideal in $L^{1}(G)\tensor A$, by \cite[Corollary 2.4]{GhaLoy} it has a left approximate identity.  Hence $I_{0}$ has a left approximate identity \cite[Theorem 8.2]{DorW},  and so $G$ is amenable by  \cite[Theorem 5.2]{Willis}.

For the partial converse, $G$ amenable  implies $L^{1}(G)$ amenable, now apply Theorem \ref{GLcorr}.
\end{proof}

Note that if $\Lambda(f_{0})=1$ then $L^{1}(G)\to I_{0}: f\mapsto f - \Lambda(f)f_{0}$ is a bounded projection onto $I_{0}$,  whence it follows that the norm on $I_{0}\tensor A$ is equivalent to that inherited from $L^{1}(G)\tensor A$.  Hence the complementation fact.

 \begin{theorem}\label{identity}
 Suppose that $A\tensor B$ is \BAC\ (respectively  \BAA).   Suppose that one of $A$  or $B$ has an identity.  Then $A$ and $B$ are \BAC\ (respectively \BAA).
  \end{theorem}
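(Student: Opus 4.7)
By symmetry we may suppose $B$ has identity $1_{B}$. The plan is to apply Theorem \ref{basic method} twice, once to each factor, and to exploit $1_{B}$ in two different ways together with the consequences \eqref{limits}--\eqref{bounds} that turn semi-inner approximations into genuinely inner ones.

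For $A$: given a continuous derivation $D\colon A\to X$ (or $D\colon A\to X^{*}$ in the \BAA\ case), retrace the proof of Theorem \ref{basic method} with the specific choice $b_{0}=1_{B}$ and any $b_{0}^{*}\in B^{*}$ satisfying $\langle b_{0}^{*},1_{B}\rangle=1$. Because $1_{B}$ commutes with every $b_{k,i}\in B$, the two expressions
$$
m'_{i}=\sum_{k}\langle b_{0}^{*},b_{0}b_{k,i}\rangle x_{k,i},\qquad n'_{i}=\sum_{k}\langle b_{0}^{*},b_{k,i}b_{0}\rangle x_{k,i}
$$
coincide (and similarly in the weak$^{*}$ formulation used in clause (ii) of Theorem \ref{basic method}). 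Hence the semi-inner approximation \eqref{net10} collapses to a boundedly approximately inner one, so $A$ is \BAC\ (respectively \BAA).

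For $B$: given a continuous derivation $D\colon B\to Y$ (or $Y^{*}$), invoke the symmetric form of Theorem \ref{basic method}, obtained by making $A\tensor Y$ into an $A\tensor B$-bimodule via $(a_{1}\otimes b_{1})\cdot(a_{2}\otimes y)=a_{1}a_{2}\otimes b_{1}\cdot y$ (and its right-handed analogue), setting $\Delta(a\otimes b)=a\otimes D(b)$, and slicing with any nonzero $a_{0}\in A$ and $a_{0}^{*}\in A^{*}$ normalised so that $\langle a_{0}^{*},a_{0}\rangle=1$. This yields nets $(m'_{i}),(n'_{i})$ in $Y$ (or $Y^{*}$) with
$$
D(b)=\lim_{i}\bigl(b\cdot m'_{i}-n'_{i}\cdot b\bigr),\qquad \|b\cdot m'_{i}-n'_{i}\cdot b\|\leq K\|b\|.
$$
The analogues of \eqref{limits}--\eqref{bounds} for derivations out of $B$ give $\lim_{i}b_{1}\cdot(m'_{i}-n'_{i})\cdot b_{2}=0$ in norm, with $\|b_{1}\cdot(m'_{i}-n'_{i})\cdot b_{2}\|\leq 3K\|b_{1}\|\|b_{2}\|$. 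Setting $b_{1}=b_{2}=1_{B}$ forces $m'_{i}-n'_{i}\to 0$ in norm; in particular, the net $(\|m'_{i}-n'_{i}\|)$ is bounded. Writing $b\cdot m'_{i}-m'_{i}\cdot b=(b\cdot m'_{i}-n'_{i}\cdot b)+(n'_{i}-m'_{i})\cdot b$, we conclude $D(b)=\lim_{i}(b\cdot m'_{i}-m'_{i}\cdot b)$ with a uniform bound in $b$, so $B$ is \BAC\ (respectively \BAA).

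The main subtlety worth flagging is that on the ``$B$ side'' one cannot force $m'_{i}=n'_{i}$ pointwise, since $A$ need not contain any central slice element; here unitality of $B$ plays a second role by collapsing the derived identity $\lim_{i}b_{1}\cdot(m'_{i}-n'_{i})\cdot b_{2}=0$ into actual norm convergence of $m'_{i}-n'_{i}$ to zero. Everything else is essentially bookkeeping around the two nets.
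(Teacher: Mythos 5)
Your treatment of $A$ is correct and is essentially the paper's own: choosing $b_0=1_B$ makes the two sliced nets $m'_i$ and $n'_i$ coincide, which is exactly the mechanism behind Theorem \ref{BEJ condition} (the paper picks $b_0^*$ annihilating $\{b_0b-bb_0:b\in B\}$, and for $b_0=1_B$ that set is $\{0\}$, so any normalised functional works).

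The $B$-side, however, has a genuine gap. From the analogue of \eqref{limits} with $b_1=b_2=1_B$ you obtain only $\lim_i 1_B\cdot(m'_i-n'_i)\cdot 1_B=0$, and for a general Banach $B$-bimodule $Y$ the element $1_B\cdot y\cdot 1_B$ need not equal $y$: unitality of the algebra does not make its modules unital. (Take $Y$ with both actions trivial; then $1_B\cdot y\cdot 1_B=0$ for every $y$ and the displayed limit says nothing about $m'_i-n'_i$.) So the step ``setting $b_1=b_2=1_B$ forces $m'_i-n'_i\to 0$ in norm'' fails as written, and with it the passage from the semi-inner approximation to an inner one. The missing ingredient is the reduction to unital modules, which is precisely what the paper inserts at this point: decompose
$$
Y = e\cdot Y\cdot e \oplus (1-e)\cdot Y\cdot e \oplus e\cdot Y\cdot(1-e) \oplus (1-e)\cdot Y\cdot(1-e)\,,
$$
note that the component derivations into the last three summands are genuinely inner (each summand has trivial $B$-action on at least one side, and then $D(b)=D(eb)$ exhibits the derivation as $ad_{\pm D(e)}$), and only then run your computation on $e\cdot Y\cdot e$, where $1_B$ does act as the identity and your cancellation goes through. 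With that insertion your argument closes and coincides with the paper's proof. A very minor further point: a norm-null net need not be bounded, only eventually bounded, so either pass to a tail or quote \eqref{bounds} directly (which gives $\|m'_i-n'_i\|\leq 3K\|1_B\|^2$ on the unital part) to get the uniform estimate required for the ``boundedly'' clause.
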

  
  \begin{proof}
 Suppose that $B$ has an identity $e$. Then, by Theorem \ref{BEJ condition}, $A$ is \BAC\ (respectively \BAA).

Now let $X$ be a Banach $B$-bimodule.  Then 
$$
X = e\cdot X\cdot e + (1-e)\cdot X \cdot e + e\cdot X\cdot (1-e) + (1-e)\cdot X \cdot (1-e)
$$
 is a decomposition into submodules.  Given a continuous derivation $D:B\to X$, it decomposes into the sum of 4 derivations into $e\cdot X \cdot e$, $(1-e)\cdot X \cdot e$ etc.  The last three of these have trivial module action by $B$ on at least one side, so the corresponding derivations are inner.  Thus we may suppose that $e\cdot x =x = x\cdot e$ for $x\in X$. 
 
 Let $D:B\to X^{*}$ be a continuous derivation, and consider the nets given by Theorem \ref{basic method}.  For the  \BAC\ situation, use clause (iii), for \BAA\ use clause (ii).   Putting $a_{1} = a_{2}= e$ in \eqref{limits} we have $m_i - n_i\to 0$, so that \eqref{eq101} and \eqref{eq102} give $D$ is boundedly approximately inner. 
  \end{proof}
\oomit{
 \begin{lemma}\label{esstrick2}
Let $A$ have a \BAI. Suppose that any continuous derivation from $A$ into the dual of a neo-unital bimodule is weak$^*$-approximately inner.  Then $A$ is \ApA.
If, further, the implementing nets can themselves be chosen to be bounded, then $A$ is \AM.
\end{lemma}
\begin{proof}
Let $X$ be a general $A$-bimodule, $D:A\to X^*$ a continuous derivation. Let $(e_\alpha)$ be a \BAI\ for $A$.  By Cohen's factorization theorem, $X_{ess} = A\cdot X\cdot A$ is a neo-unital $A$-bimodule.  Let $E$ be a weak$^*$-limit point of the left multiplication operators on $X^*$ by the elements $e_\alpha$, $F$ similarly for right multiplication.  Then $E$ and $F$ are commuting projections on $X^*$, and give  a decomposition
\begin{equation}\label{decomp3}
X^* = EFX^* \oplus E(I-F)X^* \oplus (I-E)X^*\,.
\end{equation}
Correspondingly, set
$$
D_1 = EFD, D_2 = E(I-F)D, D_3 = (I-E)D\,.
$$
These are easily seen to be derivations into the corresponding summands in \eqref{decomp3}.  Now let $\phi\in (X_{ess})^*$, and extend it by Hahn-Banach to $\tilde{\phi}\in X^*$.
Then $\theta(\phi) = EF\tilde{\phi}$ is easily seen to be a well defined $A$-bimodule monomorphism of $ (X_{ess})^*$ into $EFX^*$.  It is surjective since for $x^*\in X^*$, $\theta(x^*|_{X_{ess}}) = EFx^*$.  Thus  $EFX^*$ is isomorphic to $(X_{ess})^*$, whence $D_1$ is weak$^*$-approximately inner.\marginpar{\bf careful} But note this weak$^{*}$-topology is that on $EFX^{*}$ implemented by $X_{ess}$. 
Now the actions of $A$ on the right of $E(I-F)X^*$ and on the left of $(I-E)X^*$ are trivial,  and since $A$ has a \BAI\ $D_{2}$ and D$_{3}$ are approximately
 inner.  It follows that $D$ is weak$^*$-approximately inner.  
Thus $A$ is weak$^*$-\ApA, and the result now follows from  \cite[Proposition 2.1]{GhaLoyZh}.
In the bounded case, the same steps yield a bounded net implementing the derivation.  Thus  $A$ is amenable  by the sufficient part of \cite[Proposition 1] {Gourd}.
\end{proof}
}

 \begin{lemma}\label{esstrick3}
Let $A$ have a \BAI. Suppose that any continuous derivation from $A$ into the dual of a neo-unital bimodule is boundedly weak$^*$-approximately inner.  Then $A$ is boundedly weak$^{*}$-approximately amenable, and so \ApA.  
\end{lemma}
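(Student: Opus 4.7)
The plan is to follow the strategy sketched in the commented-out Lemma \ref{esstrick2}, reducing a general $X^*$-valued derivation to three pieces that are individually (boundedly) weak$^*$-approximately inner, and then invoking the fact, noted after Definition \ref{app}, that weak$^*$-approximate amenability is the same as \ApAy.

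Concretely, I would fix a BAI $(e_\alpha)$ for $A$ of bound $M$, a general Banach $A$-bimodule $X$, and a continuous derivation $D:A\to X^*$. Cohen's factorization yields that $X_{ess}= A\cdot X\cdot A$ is a neo-unital closed sub-bimodule. Let $E,F\in \mathcal{B}(X^*)$ be weak$^*$-operator cluster points of left and right multiplication by $e_\alpha$ respectively. Then $E$ and $F$ are commuting projections, each of norm $\leq M$, $A$-bimodule maps on the appropriate side, and they give a decomposition
$$
X^* = EFX^* \oplus E(I-F)X^* \oplus (I-E)X^*.
$$
Correspondingly I would split $D = D_1 + D_2 + D_3$ with $D_1 = EFD$, $D_2=E(I-F)D$, $D_3=(I-E)D$, each of which is a continuous derivation into the respective summand, with norms controlled by $M^2\|D\|$.

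Next, the map $\phi\mapsto EF\widetilde{\phi}$ (with $\widetilde{\phi}$ any Hahn--Banach extension) is a well-defined $A$-bimodule isomorphism $(X_{ess})^*\to EFX^*$, so $D_1$ may be viewed as a derivation into $(X_{ess})^*$. Since $X_{ess}$ is neo-unital, the hypothesis provides a bounded net $(\xi_i^{(1)})$ with $D_1(a)=\text{weak}^*\lim_i (a\cdot \xi_i^{(1)} - \xi_i^{(1)}\cdot a)$ and with bound depending only on $\|D_1\|$, hence only on $M$ and $\|D\|$. For $D_2$, the right action of $A$ on $E(I-F)X^*$ is trivial, so $D_2$ restricted there is an $A$-module map, and the standard trick $\xi_i^{(2)} = -D_2(e_i)$ exhibits $D_2$ as implemented by a bounded net (in fact weak$^*$ convergent along a subnet). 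Symmetrically for $D_3$. Summing the three implementations gives that $D$ itself is boundedly weak$^*$-approximately inner, with bound depending only on $\|D\|$ and $M$, hence $A$ is boundedly weak$^*$-approximately amenable, and therefore \ApA\ by \cite[Proposition 2.1]{GhaLoyZh}.

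The main subtlety, flagged by the marginal ``careful'' note in the struck-through Lemma \ref{esstrick2}, is the identification of $EFX^*$ with $(X_{ess})^*$: the weak$^*$-topology the hypothesis supplies on $(X_{ess})^*$ must be transported back to a topology on $EFX^*$ that is compatible with the global statement we want on $X^*$. I would verify that the isomorphism above is weak$^*$-to-weak$^*$ continuous (this is automatic since it has a preadjoint, namely restriction $X\to X_{ess}$ composed with the quotient identification), and that convergence of $(a\cdot \xi_i^{(1)} - \xi_i^{(1)}\cdot a)$ weak$^*$ in $EFX^*$ remains weak$^*$ convergence in $X^*$, because $EF$ is weak$^*$-weak$^*$ continuous. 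Keeping uniform boundedness through the decomposition and the extensions is the only bookkeeping obstacle; once it is done, the conclusion is immediate.
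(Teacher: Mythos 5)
Your overall architecture is the paper's: the projections $E,F$, the decomposition $X^*=EFX^*\oplus E(I-F)X^*\oplus(I-E)X^*$, the splitting $D=D_1+D_2+D_3$, the identification $\theta:\phi\mapsto EF\tilde{\phi}$ of $(X_{ess})^*$ with $EFX^*$, the \BAI\ argument for $D_2,D_3$ (your sign for $\xi_i^{(2)}$ is off, but that is trivia), and the final appeal to \cite[Proposition 2.1]{GhaLoyZh}. The genuine gap is in your justification of the one delicate step, which you correctly flag but then dispose of incorrectly. You assert that $\theta$ is ``automatically'' weak$^*$-to-weak$^*$ continuous because it has a preadjoint, ``namely restriction $X\to X_{ess}$''. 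There is no such map: $X_{ess}$ is a closed subspace of $X$, so restriction goes the other way --- it is the adjoint of the inclusion $X_{ess}\hookrightarrow X$, hence a map $X^*\to (X_{ess})^*$, and it is a \emph{left} inverse of $\theta$, which does not make $\theta$ weak$^*$ continuous. If you unwind $\theta$, the putative preadjoint would have to send $x\in X$ to the iterated weak limit of $e_\beta\cdot x\cdot e_\alpha$, which in general exists only in $(X_{ess})^{**}$, not in $X_{ess}$. Similarly, $E$ and $F$ are weak$^*$-operator cluster points of weak$^*$-continuous operators and need not themselves be weak$^*$ continuous, so your parenthetical ``because $EF$ is weak$^*$-weak$^*$ continuous'' is also unjustified. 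Decisively: if $\theta$ were unconditionally weak$^*$-to-weak$^*$ continuous, the word ``boundedly'' in the hypothesis would be superfluous at exactly this point, whereas the Remark following the lemma states that the bounded hypothesis ``is used to get equality of two weak$^*$-topologies''.

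The repair --- and the actual content of the lemma --- is a compactness argument that consumes the boundedness you have been carrying. Under $\theta$, the topology $\sigma((X_{ess})^*,X_{ess})$ is coarser than the restriction of $\sigma(X^*,X)$ to $EFX^*$, since one tests against fewer vectors; the unit ball of $(X_{ess})^*$ is compact in both topologies by Banach--Alaoglu, and a coarser Hausdorff topology coinciding on a set where the finer one is compact must agree with it there. Hence the two weak$^*$ topologies coincide on \emph{bounded} subsets of $(X_{ess})^*$, and the bounded net $(\xi_i^{(1)})$ supplied by the hypothesis, which converges appropriately in $\sigma((X_{ess})^*,X_{ess})$, therefore implements $D_1$ in $\sigma(X^*,X)$ as well. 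An unbounded implementing net would not transfer. With this substitution your argument closes and coincides with the paper's proof.
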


\begin{proof}
Let $X$ be a general $A$-bimodule, $D:A\to X^*$ a continuous derivation. Let $(e_\alpha)$ be a \BAI\ for $A$.  By Cohen's factorization theorem, $X_{ess} = A\cdot X\cdot A$ is a neo-unital $A$-bimodule.  Let $E$ be a weak$^*$-limit point of the left multiplication operators on $X^*$ by the elements $e_\alpha$, $F$ similarly for right multiplication.  Then $E$ and $F$ are commuting projections on $X^*$, and give  a decomposition
\begin{equation}\label{decomp4}
X^* = EFX^* \oplus E(I-F)X^* \oplus (I-E)X^*\,.
\end{equation}
Correspondingly, set
$$
D_1 = EFD, D_2 = E(I-F)D, D_3 = (I-E)D\,.
$$
These are easily seen to be derivations into the corresponding summands in \eqref{decomp4}.  Now let $\phi\in (X_{ess})^*$, and extend it by Hahn-Banach to $\tilde{\phi}\in X^*$.
Then $\theta(\phi) = EF\tilde{\phi}$ is easily seen to be a well-defined $A$-bimodule monomorphism of $ (X_{ess})^*$ into $EFX^*$.  It is surjective since for $x^*\in X^*$, $\theta(x^*|_{X_{ess}}) = EFx^*$.  Thus  $EFX^*$ is isomorphic to $(X_{ess})^*$, whence $D_1$ is boundedly weak$^*$-approximately inner. Now this weak$^{*}$-topology is $\sigma((X_{ess})^{*}, X_{ess})$, which is clearly weaker than the restriction of $\sigma(X^{*}, X)$. The unit ball in $(X_{ess})^{*}$ is compact under both topologies by Banach-Alaoglu, and so the two topologies coincide on bounded sets in $(X_{ess})^{*}$.  Thus $D_{1}$ is boundedly weak$^*$-approximately inner considered as mapping into $X^{*}$.

Now the actions of $A$ on the right of $E(I-F)X^*$ and on the left of $(I-E)X^*$ are trivial,  and since $A$ has a \BAI, $D_{2}$ and D$_{3}$ are boundedly approximately  inner.  It follows that $D$ is boundedly weak$^*$-approximately inner.  

That $A$ is \ApA\  now follows from  \cite[Proposition 2.1]{GhaLoyZh}.


\end{proof}


\begin{remark}
1. The hypothesis here of the derivations being boundedly weak$^*$-approximately inner is used to get equality of two weak$^{*}$-topologies.  Subsequently, the boundedness is lost with the appeal  \cite[Proposition 2.1]{GhaLoyZh}.    It is not known whether  $A$ must be  \BAA. 

2. In  \cite[Proposition 2.1]{GhaLoyZh}, the argument  looses control over boundedness  as Goldstine is invoked on the implementing elements, which in general will be unbounded.  Indeed, since  \BAC\ gives a bounded approximate identity \cite[Corollary 3.4]{CGZ}, which \ApA\ algebras need not have \cite[Corollary 3.2]{GhaR1}, the implication (2) $\Rightarrow$ (1) of  \cite[Proposition 2.1]{GhaLoyZh} fails with the qualifier `bounded'.  It is not known whether (3) $\Rightarrow$ (2) fails.

 3.  Note that by Banach-Steinhaus sequentially weak$^{*}$-approximately inner implies  boundedly weak$^{*}$-approximately inner. 
  \end{remark}

 \begin{theorem}\label{AMBAI}
 Suppose that $A\tensor B$ is \BApA\ and that $A$ has a \BAI.  Then $A$ is \ApA.
  \end{theorem}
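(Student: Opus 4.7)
The plan is to invoke Lemma~\ref{esstrick3}: since $A$ has a \BAI, it suffices to verify that every continuous derivation $D: A \to X^*$ into the dual of a neo-unital Banach $A$-bimodule $X$ is boundedly weak$^*$-approximately inner. First I would apply Theorem~\ref{basic method}(ii) to $D$, using the hypothesis that $A\tensor B$ is \BApA, to obtain nets $(m'_i), (n'_i) \subset X^*$ and a constant $K$ such that $D(a) = \lim_i (a \cdot m'_i - n'_i \cdot a)$ in norm and $\|a \cdot m'_i - n'_i \cdot a\| \leq K\|a\|$. The derivation identity then yields, as in \eqref{limits} and \eqref{bounds}, that $p_i := m'_i - n'_i$ satisfies $a_1 \cdot p_i \cdot a_2 \to 0$ in norm and $\|a_1 \cdot p_i \cdot a_2\| \leq 3K\|a_1\|\|a_2\|$ for all $a_1, a_2 \in A$.

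The main obstacle is to upgrade this \emph{two-sided} control on $p_i$ to a \emph{one-sided} uniform bound on $p_i \cdot a$ together with weak$^*$-convergence to $0$. This is exactly where both hypotheses enter: since $X$ is neo-unital we have $X = X \cdot A$, and since $A$ has a \BAI bounded by some $M$, Cohen's factorization theorem lets us write any unit-norm $x \in X$ as $x = y \cdot a'$ with $\|a'\| \leq M$ and $\|y\| \leq 1 + \delta$. Since $\langle p_i \cdot a, x\rangle = \langle a' \cdot p_i \cdot a, y\rangle$, it follows at once that $\|p_i \cdot a\| \leq 3KM\|a\|$ uniformly in $i$, and that $p_i \cdot a \to 0$ weak$^*$ in $X^*$.

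Taking $\zeta_i = m'_i$, we then have $a\cdot \zeta_i - \zeta_i \cdot a = (a\cdot m'_i - n'_i \cdot a) - p_i \cdot a$, which converges weak$^*$ to $D(a)$ (the first summand even in norm) and is bounded in norm by $K(1+3M)\|a\|$. Thus $D$ is boundedly weak$^*$-approximately inner, and Lemma~\ref{esstrick3} delivers the conclusion that $A$ is \ApA.
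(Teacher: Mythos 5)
Your proposal is correct and follows essentially the same route as the paper: Theorem~\ref{basic method}(ii), the identities \eqref{limits}--\eqref{bounds} for $m'_i-n'_i$, neo-unitality together with the \BAI\ to control that difference, and finally Lemma~\ref{esstrick3} with the implementing net $(m'_i)$. The only cosmetic difference is that you bound $(m'_i-n'_i)\cdot a$ one-sidedly via Cohen factorization of $X=X\cdot A$, whereas the paper pairs $m'_i-n'_i$ against elements $a_2\, x\, a_1$ and lets $a_1,a_2$ run through the approximate identity to bound $m'_i-n'_i$ itself; both yield the same conclusion.
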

 \begin{proof}
 Let $D:A\to X^*$ be a continuous derivation into the dual of a neo-unital bimodule $X$. From Theorem \ref{basic method}(ii),  we have nets $(m'_i)$ and $ (n'_i)$ in $X^*$ such that
 \begin{equation}\label{net}
 D(a)= \lim_i (a\cdot m'_i - n'_i\cdot a)\qquad (a\in A)\,,
 \end{equation}
 and $\|a\cdot m'_i - n'_i\cdot a\|\leq K\|a\|$, 
 where from \eqref{limits} and \eqref{bounds}
 \begin{equation}\label{netdiff}
  \lim_i(a_1\cdot(m'_i - n'_i)\cdot a_2) = 0\,,\qquad \|a_{1}\cdot(m'_{1} - n'_{i})\cdot a_{2}\|\leq 3K\|a_{1}\|\cdot\|a_{2}\|\end{equation}
for $a_1, a_2 \in A$.

In particular, for a given $x\in X$, and $a_{1}, a_{2}\in A$,
$$
\langle m'_{i }- n'_{i}, a_{2}x a_{1}\rangle\to 0, \ \quad  |\langle m'_{i} - n'_{i}, a_{2}x a_{1}\rangle|\leq  3K\|a_{1}\|\cdot\|a_{2}\| \cdot\|x\|\,.
$$
Since $X$ is neo-unital, it follows that
$$
 \langle m'_i - n'_i , x \rangle\to 0\,,
$$
and letting  $a_{1}, a_{2}$ range over an  approximate identity with bound $M$,
$$
\| m'_{i} - n'_{i}\|\leq  3KM^{2}\,.
$$
Thus for $a\in A$,
\begin{equation}\label{appform}
D(a) = \textrm{weak}^*-\lim_i (a \cdot m'_i - m'_i\cdot a)\,,\qquad \|a\cdot m'_{i} - m'_{i}\cdot a \|\leq 4K\|a\|\,.
\end{equation}

So we have that derivations into duals of neo-unital bimodules are boundedly weak$^*$- approximately inner, and the result follows from Lemma \ref{esstrick3}.
 \end{proof}

  The unwanted `bounded' assumption of Lemma \ref{esstrick3} and Theorem \ref{AMBAI} can be removed at the expense of a stronger hypothesis on the \BAI.  However, with this assumption comes a bonus to the conclusion of Theorem \ref{AMBAI}.

 \begin{theorem}\label{AMCBAI}
 Suppose that $A\tensor B$ is \ApA\ and that one of $A$ or $B$ has a central \BAI.  Then $A$ and $B$ are  \ApA.
 \end{theorem}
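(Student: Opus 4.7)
Without loss of generality I will take $B$ to have the central \BAI\ $(e_\alpha)$ and treat the two conclusions separately.

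For $A$ being \ApA, my plan is to invoke Theorem \ref{BEJ condition} directly, choosing $b_0 = e_{\alpha_0}$ to be any (necessarily nonzero) element of the central BAI. Its centrality forces $\{b_0 b - b b_0 : b \in B\} = \{0\}$, so $b_0 \notin \overline{\{b_0 b - b b_0\}}$, whence Theorem \ref{BEJ condition} gives $A$ is \ApA.

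For $B$ being \ApA\ I plan to mimic the proof of Theorem \ref{AMBAI}, now with centrality of the \BAI\ playing the role that the boundedness hypothesis played there. Since $B$ has a BAI, a Lemma \ref{esstrick3}-type reduction lets me assume $D : B \to Y^*$ is a continuous derivation with $Y$ a neo-unital Banach $B$-bimodule; it suffices to show $D$ is boundedly weak$^*$-approximately inner. The symmetric version of Theorem \ref{basic method}(i) produces nets $(m'_i), (n'_i) \subset Y^*$ with $D(b) = \lim_i(b\cdot m'_i - n'_i\cdot b)$ in norm and, as in \eqref{limits}, $\lim_i b_1(m'_i - n'_i)b_2 = 0$ in norm for all $b_1, b_2 \in B$.

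The decisive step is upgrading from approximately semi-inner to boundedly weak$^*$-approximately inner. By Cohen--Hewitt factorization applied to the neo-unital module $Y$, every $y \in Y$ can be written as $y = b_1 y_0 b_2$, giving
\[
\langle m'_i - n'_i, y\rangle = \langle b_2(m'_i - n'_i) b_1, y_0\rangle \longrightarrow 0 \quad (i \to \infty).
\]
Banach--Steinhaus applied to the functionals $(m'_i - n'_i) \subset Y^*$ yields $\sup_i \|m'_i - n'_i\| < \infty$, and applied to the operators $T_i : b \mapsto b m'_i - n'_i b$ (which converge pointwise to $D$) yields $\|T_i\| \le K$ uniformly. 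Decomposing $b m'_i - m'_i b = (b m'_i - n'_i b) + (n'_i - m'_i) b$ then delivers both the uniform bound $\|b m'_i - m'_i b\| \le K' \|b\|$ and, since $(n'_i - m'_i) b \to 0$ weak$^*$, the identity $D(b) = \mathrm{w}^*\text{-}\lim_i(b m'_i - m'_i b)$. Lemma \ref{esstrick3} now delivers that $B$ is \ApA.

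The main obstacle I foresee is pinning down precisely where centrality of $(e_\alpha)$ is indispensable: the two-sided Cohen--Hewitt factorization above nominally needs only a BAI, yet Theorem \ref{AMBAI} has shown that without boundedness one generally cannot reach the hypothesis of Lemma \ref{esstrick3}. Centrality is presumably what enforces the compatibility between the two one-sided essential decompositions inside Lemma \ref{esstrick3}, allowing the uniform-boundedness extraction above to transfer cleanly from the weak$^*$-topology $\sigma(Y^*, Y)$ on neo-unital $Y$ to the weak$^*$-topology on the dual of a general $B$-bimodule; this transfer is exactly the step that breaks in the unbounded setting without centrality.
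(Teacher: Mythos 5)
Your argument for $A$ is exactly the paper's: take $b_0$ to be any nonzero element of the central \BAI\ and apply Theorem \ref{BEJ condition}. The genuine gap is in your argument for $B$, at the ``decisive step''. You apply Banach--Steinhaus to the family $(m'_i-n'_i)$ and to the operators $T_i:b\mapsto b\cdot m'_i-n'_i\cdot b$ on the strength of pointwise convergence. But these are \emph{nets}, not sequences, and a pointwise convergent net of functionals or operators need not be pointwise bounded, so the uniform boundedness principle gives nothing: there is no reason why $\sup_i\|m'_i-n'_i\|<\infty$ or $\sup_i\|T_i\|<\infty$. (The Remark following Lemma \ref{esstrick3} makes precisely this point: it is \emph{sequential} weak$^*$-approximate innerness that upgrades to bounded via Banach--Steinhaus.) Since $A\tensor B$ is assumed only \ApA, not \BAA, Theorem \ref{basic method}(i) supplies no norm control on $(m'_i)$, $(n'_i)$, and none can be manufactured afterwards. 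The weak$^*$ identity $D(b)=\mathrm{w}^*\text{-}\lim_i(b\cdot m'_i-m'_i\cdot b)$ on the neo-unital module does survive without bounds, but without the uniform estimate you cannot invoke Lemma \ref{esstrick3}, whose hypothesis of \emph{bounded} weak$^*$-approximate innerness is exactly what allows $\sigma((X_{ess})^*,X_{ess})$ to be identified with the restriction of $\sigma(X^*,X)$ (the two agree only on bounded sets). Your closing paragraph correctly senses that this transfer is the sticking point, but ``centrality presumably enforces the compatibility'' is not an argument, and in your scheme centrality is in fact never used in the $B$ half --- which should itself be a warning, since with a mere \BAI\ the conclusion requires the bounded hypothesis of Theorem \ref{AMBAI}.

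The paper closes the gap differently: it does not pass to a neo-unital module and then seek bounds. It keeps the general module $X$, takes the nets $(m'_i),(n'_i)\subset X^*$ from Theorem \ref{basic method}(i), forms $D_1=EFD$ as in Lemma \ref{esstrick3}, and writes $D_1(b)$ as the iterated weak$^*$-limit of $e_\alpha\cdot(b\cdot m'_i-n'_i\cdot b)\cdot e_\beta$. Centrality of $(e_\alpha)$ is what lets one commute $b$ past $e_\alpha$ and $e_\beta$ to obtain $b\cdot(e_\alpha\cdot m'_i\cdot e_\beta)-(e_\alpha\cdot n'_i\cdot e_\beta)\cdot b$, and then \eqref{limits} with $b_1=e_\alpha$, $b_2=e_\beta$ --- a norm limit, valid with no bounds whatsoever --- replaces $n'_i$ by $m'_i$, giving genuine commutator form. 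A finite-subsets argument collapses the iterated limits into a single net showing $D_1$ is weak$^*$-approximately inner with respect to $\sigma(X^*,X)$ itself, so the topology-transfer issue never arises. To salvage your outline you must replace the Banach--Steinhaus step by this compression argument built on the central approximate identity.
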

 \begin{proof}  Suppose that $(e_{\alpha})$ is a central \BAI\ in $B$.  Let $D:B\to X^*$ be a continuous derivation into the dual of a  bimodule $X$.  From Theorem \ref{basic method}(i),  we have a nets $(m'_i)$ and $ (n'_i)$ in $X^*$ such that
 \begin{equation}\label{nett}
 D(b)= \lim_i (b\cdot m'_i - n'_i\cdot b)\qquad (b\in B)\,,
 \end{equation}
 and
 \begin{equation}\label{diff}
  \lim_i(b_1\cdot(m'_i - n'_i)\cdot b_2) = 0\qquad (b_1, b_2 \in B)\,.
\end{equation}

Now follow Lemma \ref{esstrick3} to get $D_{1},  D_{2}$ and $D_{3}$. 
Then for $b\in B$,
\begin{eqnarray}
D_{1}(b) &=& (w^{*}-\lim_{\alpha})(w^{*} -\lim_{\beta})e_{\alpha}D(b)e_{\beta}\nonumber\\
&=&
(w^{*}-\lim_{\alpha})(w^{*} -\lim_{\beta})\lim_{i}[e_{\alpha}(b\cdot m'_{i} - n'_{i}\cdot b)e_{\beta}]\,. \label{Dlimit}
\end{eqnarray}
 
 Then \eqref{diff} and \eqref{Dlimit} give, using centrality of the \BAI,
 $$
 D_{1}(b)  =(w^{*}-\lim_{\alpha})(w^{*} -\lim_{\beta})\lim_{i}[b\cdot(e_{\alpha}\cdot m'_{i}\cdot e_{\beta}) - (e_{\alpha}\cdot n'_{i}\cdot e_{\beta})\cdot b]\,. 
$$
 Thus the standard method of considering finite subsets of $B$ and $X$, gives a net $(x^{*}_{\gamma})\subset X^{*}$ such that
 $$
 D_{1}(b) = w^{*} -\lim_{\gamma} (b\cdot x^{*}_{\gamma} - x^{*}_{\gamma}\cdot b)\,, \qquad (b\in B)\,.
 $$
 Since $D_{2}$ and $D_{3}$ are approximately inner we finally deduce that $D$ is weak$^{*}$-approximately inner. Thus $B$ is \ApA.

That $A$ is \ApA\ is now a consequence of Theorem \ref{BEJ condition}.
 \end{proof}

 Finally, an application of our method that improves  on the result \cite[Proposition 3.5]{John2}.
 
 \begin{theorem}\label{amenable case}
 Suppose that $A\tensor B$ is amenable.  Then $A$ and $B$ are amenable.
\end{theorem}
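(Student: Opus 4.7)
The plan is to promote Theorem \ref{basic method}(ii) to the amenable setting: when $A\tensor B$ is amenable, the derivation $\Delta:A\tensor B\to (X^*\tensor B)^{**}$ is actually inner (not merely boundedly approximately inner), which should produce a genuine semi-inner form $D(a) = a\cdot \xi^* - \eta^*\cdot a$ with single elements $\xi^*, \eta^*\in X^*$; a neo-unitality argument then forces $\xi^* = \eta^*$. The one preparatory step is a \BAI\ for $A$, which I extract from the \BAI\ of $A\tensor B$.

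Since amenable Banach algebras have \BAI s, fix a \BAI\ $(u_\alpha)$ for $A\tensor B$. Fix $b_0\in B$ of unit norm and $b_0^*\in B^*$ with $\langle b_0^*, b_0\rangle = 1$. The bounded linear map $T:A\tensor B\to A$ defined by $T(a\otimes b) = \langle b_0^*, b\rangle a$ satisfies $T((a\otimes b_0) u_\alpha) = a\cdot e^R_\alpha$, where $e^R_\alpha\in A$ is obtained from $u_\alpha$ by applying $b\mapsto \langle b_0^*, b_0 b\rangle$ to the $B$-coordinates of $u_\alpha$. Continuity of $T$ and $(a\otimes b_0) u_\alpha \to a\otimes b_0$ give $a\cdot e^R_\alpha \to a$, so $(e^R_\alpha)$ is a bounded right approximate identity in $A$; the mirror computation yields a bounded left approximate identity, and the standard result that \BLAI\ plus \BRAI\ produces a two-sided \BAI\ gives a \BAI\ for $A$.

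Given a Banach $A$-bimodule $X$ and continuous derivation $D:A\to X^*$, construct $\Delta:A\tensor B\to (X^*\tensor B)^{**}$ with $\Delta(a\otimes b) = D(a)\otimes b$ exactly as in the proof of Theorem \ref{basic method}. Amenability of $A\tensor B$ supplies a single $m\in (X^*\tensor B)^{**}$ with $\Delta = ad_m$. Applying the operator $T:(X^*\tensor B)^{**}\to X^*$ from Theorem \ref{basic method}(ii) with $b=b_0$, the identities \eqref{xi10} and \eqref{eta100} produce $\xi^*, \eta^*\in X^*$ with
$$D(a) \;=\; T(\Delta(a\otimes b_0)) \;=\; a\cdot \xi^* - \eta^*\cdot a \qquad (a\in A)\,.$$
Applying the derivation identity to $D(a_1 a_2)$ and comparing with $a_1 a_2\cdot \xi^* - \eta^*\cdot a_1a_2$ forces $a_1\cdot (\xi^* - \eta^*)\cdot a_2 = 0$ for all $a_1, a_2 \in A$.

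When $X$ is neo-unital, Cohen's factorisation (using the \BAI\ of $A$ obtained above) writes every $x\in X$ as $a_1\cdot y\cdot a_2$, so $\langle \xi^* - \eta^*, x\rangle = \langle a_2\cdot (\xi^* - \eta^*)\cdot a_1, y\rangle = 0$; hence $\xi^* = \eta^*$ on $X$ and $D = ad_{\xi^*}$ is inner. For general $X$, decompose $X^* = EFX^*\oplus E(I-F)X^*\oplus (I-E)X^*$ as in Lemma \ref{esstrick3}: the first summand identifies with $(X_{ess})^*$ and is handled by the neo-unital case, while the other two have trivial one-sided module action and the \BAI\ of $A$ makes the corresponding derivations inner. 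Thus $D$ is inner, $A$ is amenable, and by the symmetric argument so is $B$. The main subtlety is the second step, where one might worry that $\xi^*, \eta^*$ depend on the choice of subnet in the weak$^*$-limit-point construction of $T$; but the semi-inner identity holds for any admissible choice of limit point, and that is all the subsequent Cohen-factorisation step requires.
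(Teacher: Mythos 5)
Your argument is correct, but it is not the route the paper takes. The paper keeps everything in the ``approximate but bounded'' world: it gets \BAI s for $A$ and $B$ from that of $A\tensor B$ via \cite[Theorem 8.2]{DorW}, uses the necessity half of \cite[Proposition 1]{Gourd} to make the net $(m_i)$ at \eqref{start} bounded, runs the machinery of Theorems \ref{basic method} and \ref{AMBAI} to reach \eqref{appform} with a bounded net, and then combines Lemma \ref{esstrick3} with the argument of \cite[Proposition 2.1]{GhaLoyZh} and the sufficiency half of \cite[Proposition 1]{Gourd} to conclude amenability. You instead exploit exactness from the outset: amenability of $A\tensor B$ makes $\Delta$ genuinely inner in the dual bimodule $(X^*\tensor B)^{**}$, so the slicing identities \eqref{xi10} and \eqref{eta100} yield a single pair $\xi^*,\eta^*$ with $D(a)=a\cdot\xi^*-\eta^*\cdot a$, the derivation identity forces $a_1\cdot(\xi^*-\eta^*)\cdot a_2=0$, and Cohen factorisation kills $\xi^*-\eta^*$ on a neo-unital module. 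Your version buys a shorter, more self-contained proof: it bypasses Gourdeau's characterisation, the weak$^*$-topology comparison inside Lemma \ref{esstrick3} (irrelevant once the implementing element is exact rather than a net), and the Mazur-type passage in \cite[Proposition 2.1]{GhaLoyZh}; it also produces honest innerness for duals of neo-unital modules directly. What the paper's route buys is uniformity -- the amenable case falls out of exactly the same apparatus already built for the \ApA\ and \BAA\ cases, with ``bounded'' inserted at each step. Your direct extraction of a \BAI\ for $A$ by slicing $(u_\alpha)$ with $b_0^*$ and invoking the \BLAI\ plus \BRAI\ argument is a legitimate replacement for the citation of \cite[Theorem 8.2]{DorW}, and your closing remark about independence of the choice of weak$^*$ limit point in the construction of $\xi^*$ is the right thing to check: one fixed subnet gives one $\xi^*$ satisfying \eqref{xi10} for all $a$ simultaneously, which is all the argument needs.
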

\begin{proof}  
Amenability of $A\tensor B$  implies it has a \BAI, whence so do $A$ and $B$, \cite[Theorem 8.2]{DorW}.  Now arguing as in Theorem \ref{basic method}  until at \eqref{start} and using  the necessary part of \cite[Proposition 1]{Gourd} we obtain a bounded net $(m_{i})$.  Then proceed as before to \eqref{appform} where the net $(m'_i)$ is now bounded.  Now use second part of Lemma \ref{esstrick3} to see that derivations from $A$ into a dual module are weak$^{*}$-approximately inner, with a bounded net of implementing elements.  The argument of \cite[Proposition 2.1]{GhaLoyZh} now shows that any continuous derivation into any $A$-bimodule  is approximately inner with a bounded net of implementing elements, that is, $A$ is amenable  by the sufficient part of \cite[Proposition 1] {Gourd}.
 \end{proof}

\section*{Acknowledgement}
The authors wish to thank Yong Zhang for pointing out some gaps in an earlier version of the paper.

\end{document}